\newtheorem{theorem}{Theorem}[section]
\newtheorem{lemma}{Lemma}[section]
\newtheorem{remark}{Remark}[section]
\begin{document}
\title{Universal curvature identities II}
\author{P. Gilkey, J.H. Park, and K. Sekigawa}
\address{PG: Mathematics Department, University of Oregon, Eugene OR 97403 USA}
\email{gilkey@uoregon.edu}
\address{JHP: Department of Mathematics, Sungkyunkwan University, Suwon
440-746, Korea \\\phantom{JHP:..A} Korea Institute for Advanced Study, Seoul
130-722, Korea} \email{parkj@skku.edu}
\address{KS: Department of Mathematics, Niigata University, Niigata, Japan.}
\email{sekigawa@math.sc.niigata-u.ac.jp}
\begin{abstract}{We
show that any universal curvature identity which holds in the Riemannian
setting extends naturally to the
pseudo-Riemannian setting. Thus the
Euh-Park-Sekigawa identity also holds for pseudo-Riemannian manifolds. We study the Euler-Lagrange
equations associated to the Chern-Gauss-Bonnet formula and show that as in the Riemannian setting,
they are given solely in terms of curvature (and not in terms of covariant derivatives of curvature) even
in the pseudo-Riemannian setting.
\\MSC 2010: 53B20.
\\Keywords: Pfaffian, Chern-Gauss-Bonnet theorem, Euler-Lagrange Equations,
Euh-Park-Sekigawa identity.}\end{abstract}

\maketitle

\section{Introduction}

The study of scalar and symmetric 2-tensor valued invariants of a
metric is central in modern differential geometry. It also plays an
important role in mathematical physics. The scalar curvature is the
simplest such invariant and plays a central role not only in the
Riemannian geometry \cite{D10,M10}. It also is important in the
higher signature setting \cite{CL11,K11,M11,ZW10}.  The norm of the
Weyl conformal curvature tensor $|W|^2$ appears in many settings,
see for example the discussion in \cite{C11,H10,K10}. Turning to
symmetric 2-tensor valued invariants, the trace free Ricci tensor is
important \cite{G10} as is the Ricci tensor not only in the positive
definite \cite{M10x} but also the indefinite settings
\cite{CRR10,Ca10}; see also \cite{SKY06} where the Weyl conformal
tensor plays a crucial role. The Pfaffian (Gauss-Bonnet curvature)
is a complicated invariant of the curvature tensor that defines the
Einstein-Hilbert-Lovelock functional \cite{LS10} and is important in
Kazdan-Warner type identities \cite{GHL11}. It is also related to
the Lipschitz-Killing curvature \cite{SX11}.

Motivated by these example (and many more), we have decided to undertake a
systematic study of scalar and symmetric 2-tensor valued invariants from an abstract
point of view not only in the Riemannian but also the higher signature setting. We
first proceed in the purely algebraic setting. A pair
$(V,\varepsilon)$ is called an {\it inner product space} if $V$ is real vector space
of dimension $m$ and if
$\varepsilon$ is a non-degenerate inner product of signature $(p,q)$ on $V$ where $p+q=m$. An {\it algebraic curvature
tensor}
$A$ is an element of
$\otimes^4(V^*)$ satisfying the relations of the Riemann curvature tensor, namely for all
$x,y,z,w\in V$ one has the following relations:
\begin{equation}\label{eqn-1.a}
\begin{array}{l}
A(x,y,z,w)=-A(y,x,z,w)=A(z,w,x,y),\\
A(x,y,z,w)+A(y,z,x,w)+A(z,x,y,w)=0\,.\vphantom{\vrule height 12pt}
\end{array}\end{equation}
Let $\mathfrak{A}(V)\subset\otimes^4(V^*)$
be the linear subspace of all such tensors. A triple $(V,\varepsilon,A)$ is said to be a {\it curvature
model} if $(V,\varepsilon)$ is an inner product space and if $A\in\mathfrak{A}(V)$.

\subsection{Geometric Realizations}
We say that a curvature model $(V,\varepsilon,A)$ is {\it geometrically realized} at a
point
$\xi$ of a pseudo-Riemannian manifold $(M,g)$ if there exists an
isomorphism
$\Phi$ from $T_\xi M$ to $V$ so that $\Phi^*\varepsilon=g_\xi$ is  the metric at $\xi$
and so that
$\Phi^*A=R_\xi$ is the associated Riemann curvature tensor of the Levi-Civita connection at $\xi$.
A useful result in the field, which we shall prove in Section
\ref{sect-2} in the interests of completeness, is the following result:

\begin{theorem}\label{thm-1.1} Every curvature model can be geometrically realized at some point of some
compact pseudo-Riemannian manifold.
\end{theorem}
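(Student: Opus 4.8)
The plan is to realize the model first on a small neighborhood of the origin in $\mathbb{R}^m$ and then to transplant the resulting metric onto a flat torus, thereby obtaining a compact example. Fix a basis of $V$ identifying it with $\mathbb{R}^m$, and write $\varepsilon_{ij}$ and $A_{ijkl}$ for the components of $\varepsilon$ and $A$ in this basis. For the local step I would search for the metric among quadratic perturbations of the constant metric: on $\mathbb{R}^m$ with linear coordinates $(x^1,\dots,x^m)$ set
$$g_{ij}(x)=\varepsilon_{ij}+c\sum_{k,l}A_{ikjl}\,x^k x^l$$
for a universal constant $c$ to be determined. Then $g(0)=\varepsilon$ and every first derivative of $g$ vanishes at the origin, so the Christoffel symbols vanish at $0$ and the curvature there is given purely by the second derivatives through the usual quadratic formula $R_{ijkl}(0)=\tfrac12\bigl(\partial_i\partial_k g_{jl}+\partial_j\partial_l g_{ik}-\partial_i\partial_l g_{jk}-\partial_j\partial_k g_{il}\bigr)(0)$.

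Substituting the ansatz and collecting terms, the crux is a purely algebraic verification: using the antisymmetry, the pair symmetry, and, crucially, the first Bianchi identity of \eqref{eqn-1.a}, the symmetrized combination of components of $A$ produced by the formula collapses back to a multiple of $A_{ijkl}$, so that a suitable choice of the constant $c$ yields exactly $R_{ijkl}(0)=A_{ijkl}$. I expect this Bianchi-identity bookkeeping to be the only genuinely delicate point of the argument; everything else is formal manipulation.

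For the global step I would damp the perturbation away from the origin. Choose a smooth bump function $\phi$ with $\phi\equiv1$ on a neighborhood of $0$ and with support in a small ball $B$, and set $\tilde g_{ij}(x)=\varepsilon_{ij}+c\,\phi(x)\sum_{k,l}A_{ikjl}\,x^k x^l$. Because $\phi\equiv1$ near the origin, the $2$-jet of $\tilde g$ at $0$ agrees with that of $g$, so $R_{\tilde g}(0)=A$ still holds. Since $\tilde g$ is a pointwise small perturbation of the nondegenerate constant form $\varepsilon$ once $B$ is taken small enough, $\tilde g$ remains nondegenerate of signature $(p,q)$ everywhere, and outside $B$ it coincides with the flat metric $\varepsilon$.

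Finally I would pass to a torus. Take the lattice $L=(2\pi\mathbb{Z})^m$ with $B\subset(-\pi,\pi)^m$ contained in the interior of a fundamental domain. Since $\tilde g\equiv\varepsilon$ near the boundary of that domain, $\tilde g$ descends to a smooth pseudo-Riemannian metric of signature $(p,q)$ on the compact manifold $M=T^m=\mathbb{R}^m/L$. Let $\xi$ be the image of the origin and let $\Phi\colon T_\xi M\to V$ be the isomorphism determined by the coordinate frame at $\xi$ together with the chosen basis of $V$. By construction $\Phi^*\varepsilon=g_\xi$ and $\Phi^*A=R_\xi$, so $(V,\varepsilon,A)$ is geometrically realized at $\xi$ on the compact manifold $M$, as required.
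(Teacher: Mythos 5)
Your proposal is correct and takes essentially the same route as the paper: the same quadratic ansatz $g_{ij}=\varepsilon_{ij}+c\,A_{ikjl}x^kx^l$ realizing $A$ at the origin, and the same globalization, since your damped metric $\tilde g=\varepsilon+c\,\phi\,A_{ikjl}x^kx^l$ is exactly the paper's interpolation $\phi g+(1-\phi)h$ with the flat metric $h=\varepsilon$ on the torus. The only difference is that the paper carries out the Bianchi-identity computation you sketch, showing the symmetrized combination collapses to $3c\,A_{ijkl}$ (so $c=\pm\frac13$ depending on the curvature sign convention; the paper writes $g_{ik}=\varepsilon_{ik}-\frac13 A_{ijlk}x^jx^l$), which confirms your claim that a suitable universal constant exists.
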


Theorem~\ref{thm-1.1} shows that the symmetries of Equation~(\ref{eqn-1.a}) generate the
universal curvature symmetries of the Riemann curvature tensor; there are no
hidden additional symmetries. It lets us pass freely between the algebraic context and the geometric
setting.

\subsection{Scalar invariants} The orthogonal group
$\mathcal{O}(V,\varepsilon)$ acts on the space of algebraic curvature tensors
$\mathfrak{A}(V)$ by pullback where one sets:
$$(T^*A)(v_1,v_2,v_3,v_4):=A(Tv_1,Tv_2,Tv_3,Tv_4)\,.$$
If $P(A)$ is a polynomial of degree $\ell$ in the components of
$A$ relative to some basis for $V$, then we shall say that $P$ is a {\it scalar invariant} if
$P(T^*A)=P(A)$ for all
$A\in\mathfrak{A}$ and for all $T\in\mathcal{O}(V,\varepsilon)$. Let
$\mathcal{O}(V,\varepsilon)$ act trivially on $\mathbb{R}$. If we polarize
$P$, we can regard, equivalently,
$P$ as a {\it linear invariant} of $\otimes^\ell(\mathfrak{A}(V))$, i.e. as an equivariant linear map from
$\otimes^\ell(\mathfrak{A}(V))$ to
$\mathbb{R}$. We
let
$\mathcal{J}_{2\ell}(V,\varepsilon)$ be the vector space of all such maps; these maps are homogeneous of degree $2\ell$
in the derivatives of the metric and this is a convenient indexing convention to use as there will be maps of odd order in
the derivatives of the metric when we study manifolds with boundary presently.

Since any two inner product spaces of the same
signature
$(p,q)$ are isomorphic, we shall set
$\mathcal{J}_{2\ell}(p,q):=\mathcal{J}_{2\ell}(V,\varepsilon)$ for any inner product space $(V,\varepsilon)$
of signature
$(p,q)$. We shall see presently in Remark~\ref{rmk-1.1} that there is a natural way to identify $\mathcal{J}_{2\ell}(p,q)$
with $\mathcal{J}_{2\ell}(p_1,q_1)$ if $p_1+q_1=p+q$; this common space of invariants will then be denoted by
$\mathcal{J}_{2\ell}(m)$ since only the underlying dimension $m=p+q$ is normative.

Suppose that $P\in\mathcal{J}_{2\ell}(p,q)$. Let $(V,\varepsilon)$ have signature $(p,q)$, let
$(M,g)$ be a pseudo-Riemannian manifold of signature $(p,q)$, and let $R$ be the Riemann curvature
tensor of the Levi-Civita connection on $(M,g)$. Given $\xi$ in $M$, we can find an isometry $\Phi$
which identifies
$(T_\xi M,g_\xi)$ with $(V,\varepsilon)$. We then define $P(R_\xi):=P(\Phi^*R_\xi)$; the particular
isometry $\Phi$ which is chosen being irrelevant as $P$ is invariant under the action of the orthogonal group. Thus
elements of
$\mathcal{J}_{2\ell}$ give rise to geometric invariants; conversely, Theorem \ref{thm-1.1} lets us extend invariants from
the geometric to the algebraic setting and identify the two contexts.

Let $\varepsilon^*$ denote the dual inner product on $V^*$;
$\varepsilon^*$ is a linear map from $V^*\otimes V^*$ to
$\mathbb{R}$ and, more generally, $\otimes^{2\ell}(\varepsilon^*)$is a linear map from $\otimes^{4\ell}(V^*)$ to $\mathbb{R}$ which is
invariant under the action of the orthogonal group. Let
$\operatorname{Perm}(4\ell)$ be the group of permutations of $4\ell$
elements and let $\sigma\in\operatorname{Perm}(4\ell)$. We let
$\sigma$ act on $\otimes^{4\ell}(V^*)$ by permuting the factors and
let
$Q_{2\ell,\sigma,\varepsilon}:=\left\{\otimes^{2\ell}(\varepsilon^*)\right\}\circ\sigma$.
This is clearly invariant under the orthogonal group and the
restriction of $Q_{2\ell,\sigma,\varepsilon}$ to
$\otimes^\ell(\mathfrak{A}(V))$ defines an element of
$\mathcal{J}_{2\ell}(V,\varepsilon)$ which we shall denote by
$P_{2\ell,\sigma,\varepsilon}$.

There is a convenient formalism for describing the invariants $P_{2\ell,\sigma,\varepsilon}$. Choose a basis
$\{e_i\}$ for $V$; let $\{e^i\}$ be the corresponding dual basis for $V^*$. Let
$\varepsilon_{ij}:=\varepsilon(e_i,e_j)$ and let $\varepsilon^{ij}=\varepsilon^*(e^i,e^j)$ denote the
components of $\varepsilon$ and of $\varepsilon^*$ relative to this basis, respectively;
$\varepsilon^{ij}$ is the inverse of the matrix $\varepsilon_{ij}$. If
$x$ and
$y$ are vectors (or co-vectors), we let $x\circ y:=\frac12(x\otimes y+y\otimes x)$ be the {\it
symmetric tensor product} of $x$ with $y$. Adopt the {\it
Einstein convention} and sum over repeated indices to express:
$$\varepsilon=\varepsilon_{ij}e^i\circ e^j\quad\text{and}\quad \varepsilon^*=\varepsilon^{ij}e_i\circ e_j\,.$$If $A\in\otimes^4(V^*)$, let
$A_{ijkl}:=A(e_i,e_j,e_k,e_l)$ give the components of $A$ relative
to the given basis. We then have that:
$$A=A_{ijkl}e^i\otimes e^j\otimes e^k\otimes e^l\,.$$
Let $\sigma\in\operatorname{Perm}(4\ell)$. Set $\sigma_i:=\sigma(i)$ for
$1\le i\le 4\ell$. We may now express:
\begin{equation}\label{eqn-1.b}
P_{2\ell,\sigma,\varepsilon}(A)=\varepsilon^{i_1i_2}\cdot\cdot\cdot\varepsilon^{i_{4\ell-1}i_{4\ell}}
A_{i_{\sigma_1}i_{\sigma_2}i_{\sigma_3}i_{\sigma_4}}\cdot\cdot\cdot
A_{i_{\sigma_{4\ell-3}}i_{\sigma_{4\ell-2}}i_{\sigma_{4\ell-1}}i_{\sigma_{4\ell}}}\,.
\end{equation}
If we let $\beta=\sigma^{-1}$, then we may also express this invariant in the form:
\begin{equation}\label{eqn-1.c}
P_{2\ell,\sigma,\varepsilon}(A)=\varepsilon^{i_{\beta_1}i_{\beta_2}}\cdot\cdot\cdot
\varepsilon^{i_{\beta_{{4\ell-1}}}i_{\beta_{4\ell}}}
A_{i_1i_2i_2i_4}\cdot\cdot\cdot
A_{i_{4\ell-3}i_{4\ell-2}i_{4\ell-1}i_{4\ell}}\,.
\end{equation}

The discussion above shows the value of Equation~(\ref{eqn-1.b}) is independent of the particular basis
which was chosen. The usual scalar invariants of Riemannian geometry can be expressed in this notation.
 The scalar curvature
$\tau$ is given by setting
$$\tau(A):=\varepsilon^{i_1i_2}\varepsilon^{i_3i_4}A_{i_1i_3i_4i_2}\,.$$
Thus $\tau=P_{2\ell,\sigma,\varepsilon}$ where $\sigma_1=1$, $\sigma_2=3$, $\sigma_3=4$, and $\sigma_4=2$,
i.e.
$$
   \sigma=\left(\begin{array}{llll}1234\\1342\end{array}\right)\,.
$$
Note that the permutation defining $\tau$ is not unique as the scalar curvature
can also defined by setting:
$$\tau=\varepsilon^{i_1i_2}\varepsilon^{i_3i_4}A_{i_2i_4i_3i_1}\quad\text{i.e. by taking}\quad
\tilde\sigma:=\left(\begin{array}{llll}1234\\2431\end{array}\right)\,.
$$
The following well known result follows from the Theorem of Invariants of
H. Weyl
\cite{W46}; we will give the proof Section~\ref{sect-3} in the interests of completeness as there is a
minor technical point (see Lemma~\ref{lem-3.1}) which will be needed subsequently:
\begin{theorem}\label{thm-1.2}
$\mathcal{J}_{2\ell}(V,\varepsilon)=\operatorname{Span}_{\sigma\in\operatorname{Perm}(4\ell)}\{P_{2\ell,\sigma,\varepsilon}\}$.
\end{theorem}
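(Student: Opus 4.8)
The plan is to deduce the result from H.\ Weyl's theorem of invariants \cite{W46} by first passing to the full tensor algebra and then restricting back to the curvature tensors. Consider the inclusion $\iota\colon\otimes^\ell(\mathfrak{A}(V))\hookrightarrow\otimes^{4\ell}(V^*)$. The orthogonal group acts on both spaces, $\iota$ is equivariant, and precomposition with $\iota$ gives a linear map from the space of $\mathcal{O}(V,\varepsilon)$-invariant linear functionals on $\otimes^{4\ell}(V^*)$ to $\mathcal{J}_{2\ell}(V,\varepsilon)$. By Weyl's first fundamental theorem, an $\mathcal{O}(V,\varepsilon)$-invariant linear functional on $\otimes^{4\ell}(V^*)$ is a linear combination of the complete contractions obtained by grouping the $4\ell$ indices into $2\ell$ pairs and contracting each pair with the dual metric $\varepsilon^*$; these pairings are exactly parametrized, with the evident redundancy, by $\sigma\in\operatorname{Perm}(4\ell)$, and the corresponding functionals are precisely the $Q_{2\ell,\sigma,\varepsilon}$. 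Since $P_{2\ell,\sigma,\varepsilon}$ is by definition the restriction of $Q_{2\ell,\sigma,\varepsilon}$ along $\iota$, it then suffices to show that this restriction map is surjective onto $\mathcal{J}_{2\ell}(V,\varepsilon)$.

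Surjectivity I would obtain from an equivariant splitting of $\iota$. The curvature symmetries of Equation~(\ref{eqn-1.a}) are cut out by a Young symmetrizer, so there is an $\mathcal{O}(V,\varepsilon)$-equivariant projection $\pi\colon\otimes^4(V^*)\to\mathfrak{A}(V)$ restricting to the identity on $\mathfrak{A}(V)$. Taking $\Pi:=\otimes^\ell\pi$ gives an equivariant projection $\otimes^{4\ell}(V^*)\to\otimes^\ell(\mathfrak{A}(V))$ with $\Pi\circ\iota=\operatorname{id}$. Hence, given $P\in\mathcal{J}_{2\ell}(V,\varepsilon)$, the composite $P\circ\Pi$ is an invariant functional on $\otimes^{4\ell}(V^*)$ whose restriction along $\iota$ recovers $P$. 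Expanding $P\circ\Pi$ in the $Q_{2\ell,\sigma,\varepsilon}$ and restricting then exhibits $P$ as a linear combination of the $P_{2\ell,\sigma,\varepsilon}$, which is the assertion.

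The main obstacle is that Weyl's theorem is classically formulated for the compact orthogonal group, that is, in the positive definite case, whereas $\mathcal{O}(V,\varepsilon)$ is noncompact in indefinite signature and no averaging over the group is available. I would handle this by first carrying out the argument above in the Riemannian setting, where the Haar-measure averaging of Weyl applies directly, and then transferring the conclusion to arbitrary signature. The functionals $P_{2\ell,\sigma,\varepsilon}$ are defined for every signature by the universal formula~(\ref{eqn-1.b}), and the identification of $\mathcal{J}_{2\ell}(p,q)$ across signatures, the content of Remark~\ref{rmk-1.1} resting on the technical point isolated in Lemma~\ref{lem-3.1}, carries $P_{2\ell,\sigma,\varepsilon}$ for one signature to $P_{2\ell,\sigma,\varepsilon'}$ for another with the same permutation $\sigma$. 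Since the spanning property is preserved under this identification, it propagates from the positive definite case to all signatures; alternatively, one observes that the invariants are polynomial and hence governed by the common complexification $\mathcal{O}(m,\mathbb{C})$ of all the real forms $\mathcal{O}(p,q)$. The delicate point is thus not the algebra of contractions but ensuring that the noncompactness does not enlarge the space of invariants, which is exactly what the signature-change identification guarantees.
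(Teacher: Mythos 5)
Your overall architecture---extend an invariant functional from $\otimes^\ell(\mathfrak{A}(V))$ to all of $\otimes^{4\ell}(V^*)$, apply Weyl's first fundamental theorem there, and restrict back---is exactly the paper's strategy. Your extension mechanism differs in an acceptable and even pleasant way: the paper invokes Lemma~\ref{lem-3.1} to obtain an equivariant \emph{orthogonal} decomposition $\otimes^{4\ell}(V^*)=\{\otimes^\ell(\mathfrak{A}(V))\}\oplus\{\otimes^\ell(\mathfrak{A}(V))\}^\perp$ and extends by zero on the complement, whereas you compose with an equivariant projection $\Pi=\otimes^\ell\pi$. Since $\otimes^4(V^*)$ is a completely reducible $GL(V)$-module and $\mathfrak{A}(V)$ is a $GL(V)$-submodule, such a $\pi$ exists (whether it is literally a Young symmetrizer is immaterial), and $GL(V)$-equivariance implies $\mathcal{O}(V,\varepsilon)$-equivariance in every signature; so this step is sound and in fact sidesteps the non-degeneracy issue that Lemma~\ref{lem-3.1} exists to resolve.

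The genuine gap is in your treatment of indefinite signature. Your primary transfer mechanism is circular: the identification of $\mathcal{J}_{2\ell}(p,q)$ with $\mathcal{J}_{2\ell}(p_1,q_1)$ in Remark~\ref{rmk-1.1} is constructed by \emph{first} writing an arbitrary invariant as $\sum_\sigma c_\sigma P_{2\ell,\sigma,\varepsilon}$---that is, by invoking Theorem~\ref{thm-1.2} in the given signature---and then using Theorem~\ref{thm-1.3} to see that the transported expression is well defined. (It rests on Theorems~\ref{thm-1.2} and \ref{thm-1.3}, not on Lemma~\ref{lem-3.1} as you assert.) Knowing the spanning statement only in the positive definite case, Remark~\ref{rmk-1.1} gives a map from $\mathcal{J}_{2\ell}(0,m)$ \emph{into} the span of the $P_{2\ell,\sigma,\varepsilon(p,q)}$; it says nothing about whether an arbitrary element of $\mathcal{J}_{2\ell}(p,q)$ lies in that span, which is precisely the assertion to be proved, and Theorem~\ref{thm-1.3} cannot substitute since it concerns identities among the contraction invariants rather than arbitrary invariants. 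Your parenthetical alternative is the correct repair and should be the main argument: a real polynomial invariant of $\mathcal{O}(p,q)$ on $\otimes^{4\ell}(V^*)$ extends, by Zariski density of $\mathcal{O}(p,q)$ in $\mathcal{O}(m,\mathbb{C})$, to an invariant of $\mathcal{O}(m,\mathbb{C})$, for which the first fundamental theorem gives the spanning by complete contractions; this is the same complexification mechanism the paper uses in Section~\ref{sect-4}. Alternatively, one can simply note, as the paper implicitly does, that Weyl's theorem as proved in \cite{W46} is algebraic rather than an averaging argument and applies verbatim to the orthogonal group of any non-degenerate form in characteristic zero, so no compactness and hence no transfer argument is needed at all.
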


\subsection{Universal curvature identities} Let
$\mathcal{C}:=\{c_\sigma\}_{\sigma\in\operatorname{Perm}(4\ell)}$ be a
collection of real constants. We say that
$$P_{2\ell,\mathcal{C},\varepsilon}:=\sum_{\sigma\in\operatorname{Perm}(4\ell)} c_\sigma
P_{2\ell,\sigma,\varepsilon}$$ is a {\it universal curvature identity} on
$(V,\varepsilon)$ if
$P_{2\ell,\mathcal{C},\varepsilon}(A)=0$ for all
$A\in\mathfrak{A}(V)$. Some identities hold for all dimensions. For example, the discussion
given above shows
$$0=\varepsilon^{i_1i_2}\varepsilon^{i_3i_4}A_{i_2,i_4,i_3,i_1}
-\varepsilon^{i_1i_2}\varepsilon^{i_3i_4}A_{i_1,i_3,i_4,i_2}\,.$$
Here
$$c_\sigma=\left\{\begin{array}{rrr}
+1&\text{if}&\sigma=\left(\begin{array}{llll}1234\\2431\end{array}\right)\\
-1&\text{if}&\sigma=\left(\begin{array}{llll}1234\\1342\end{array}\right)^{\vphantom{A}}_{\vphantom{A}}\\
0&\text{if}&\text{otherwise}\end{array}\right\}\,.$$
Other identities are dimension specific. Let $\rho$ denote the Ricci tensor. For example, if
$m=3$, then we have the following identity (see Remark~\ref{rmk-1.2} for further details):
\begin{equation}\label{eqn-1.d}
\tau(A)^2-4|\rho(A)|^2+|A|^2=0\,.
\end{equation}
Since $\tau(A)^2$, $|\rho(A)|^2$, and $|A|^2$ are scalar invariants, this relation can be expressed in the
form $P_{2\ell,\mathcal{C},\varepsilon}$ as follows. We have
\begin{eqnarray*}
&&\tau(A)^2=\varepsilon^{i_1i_2}\varepsilon^{i_3i_4}\varepsilon^{i_5i_6}\varepsilon^{i_7i_8}
A_{i_1i_3i_4i_2}A_{i_5i_7i_8i_6},\\
&&|\rho(A)|^2=\varepsilon^{i_1i_2}\varepsilon^{i_3i_4}\varepsilon^{i_5i_6}\varepsilon^{i_7i_8}
A_{i_1i_3i_4i_5}A_{i_2i_7i_8i_6},\\
&&|A|^2=\varepsilon^{i_1i_2}\varepsilon^{i_3i_4}\varepsilon^{i_5i_6}\varepsilon^{i_7i_8}
A_{i_1i_3i_5i_7}A_{i_2i_4i_6i_8}\,.
\end{eqnarray*}
Thus the relation in Equation~(\ref{eqn-1.d}) can be defined by taking
$$c_\sigma=\left\{\begin{array}{rl}
1&\text{ if }\sigma=\left(\begin{array}{llllllll}12345678\\13425786\end{array}\right)\\
-4&\text{ if }\sigma=\left(\begin{array}{llllllll}12345678\\13452786\end{array}\right)
_{\vphantom{A}}^{\vphantom{A}}\\
1&\text{ if }\sigma=\left(\begin{array}{llllllll}12345678\\13572468\end{array}\right)
_{\vphantom{A}}^{\vphantom{A}}\\
0&\text{ otherwise}
\end{array}\right\}\,.$$
 We note that
this relation does not hold in dimension
$m=4$; fixing the underlying dimension of the vector space can be crucial.

\subsection{Changing the signature} The identity of Equation~(\ref{eqn-1.d}) is not specific to the
signature; it holds for any 3-dimensional pseudo-Riemannian manifold or curvature module -- i.e. in
signatures
$(0,3)$,
$(1,2)$,
$(2,1)$, and
$(3,0)$. More generally, the signature plays no role when considering universal curvature identities.
We will establish the following result subsequently in Section~\ref{sect-4}:

\begin{theorem}\label{thm-1.3}
 Let $\mathcal{C}:=\{c_\sigma\}_{\sigma\in\operatorname{Perm}(4\ell)}$
be a collection of real constants. Let $(V_i,\varepsilon_i)$ be inner product spaces
of signature $(p_i,q_i)$ where $m=p_1+q_1=p_2+q_2$. Then
$$
P_{2\ell,\mathcal{C},\varepsilon_1}(A_1)=0\ \forall\ A_1\in\mathfrak{A}(V_1)
\Leftrightarrow
P_{2\ell,\mathcal{C},\varepsilon_2}(A_2)=0\ \forall\ A_2\in\mathfrak{A}(V_2)\,.
$$
\end{theorem}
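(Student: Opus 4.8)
The plan is to exploit the single feature that distinguishes the real signatures from one another, namely Sylvester's law of inertia, together with the observation that this feature disappears upon complexification. First I would record that the defining symmetries in Equation~(\ref{eqn-1.a}) make no reference to $\varepsilon$, so that the subspace $\mathfrak{A}(V)\subset\otimes^4(V^*)$ depends only on the underlying real vector space $V$. Consequently, if $V_{\mathbb{C}}:=V\otimes_{\mathbb{R}}\mathbb{C}$ denotes the complexification and $\mathfrak{A}(V_{\mathbb{C}})$ the space of complex algebraic curvature tensors, then any real basis $\{B_a\}_{a=1}^N$ of $\mathfrak{A}(V)$ is simultaneously a complex basis of $\mathfrak{A}(V_{\mathbb{C}})=\mathfrak{A}(V)\otimes_{\mathbb{R}}\mathbb{C}$.

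Next I would pass from the real to the complex setting by a polynomial-identity argument. By Equation~(\ref{eqn-1.b}), once the components $\varepsilon^{ij}$ are fixed, $P_{2\ell,\mathcal{C},\varepsilon_1}$ is a polynomial with real coefficients in the components $A_{ijkl}$; writing $A=\sum_a x_aB_a$ makes it a real polynomial $p(x_1,\dots,x_N)$ in the coordinates $x_a$ of $A$ relative to $\{B_a\}$. The hypothesis $P_{2\ell,\mathcal{C},\varepsilon_1}(A_1)=0$ for all $A_1\in\mathfrak{A}(V_1)$ says precisely that $p$ vanishes on all of $\mathbb{R}^N$; since $\mathbb{R}^N$ is Zariski dense in $\mathbb{C}^N$, the polynomial $p$ vanishes identically, and hence $P_{2\ell,\mathcal{C},\varepsilon_{1,\mathbb{C}}}(A)=0$ for every complex tensor $A\in\mathfrak{A}(V_{1,\mathbb{C}})$.

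The heart of the argument is a naturality statement. If $T\colon V_{1,\mathbb{C}}\to V_{2,\mathbb{C}}$ is a complex-linear isomorphism and one sets $\tilde\varepsilon:=T^*\varepsilon$, then $T^*$ carries $\mathfrak{A}$ to $\mathfrak{A}$ (the symmetries being $\varepsilon$-free) and, because the right-hand side of Equation~(\ref{eqn-1.b}) is the value of a single tensor contraction expressed in a basis, one has the change-of-basis identity $P_{2\ell,\sigma,\tilde\varepsilon}(T^*A)=P_{2\ell,\sigma,\varepsilon}(A)$, valid verbatim over $\mathbb{C}$; summing over $\sigma$ with the weights $c_\sigma$ gives the same identity for $P_{2\ell,\mathcal{C}}$. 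Over $\mathbb{C}$ Sylvester's law fails: every non-degenerate symmetric bilinear form is equivalent to $\sum_i(e^i)^2$, and a diagonal transformation with entries in $\{1,i\}$ interchanges a $+1$ and a $-1$ on the diagonal. Hence there is a complex-linear isomorphism $T$ with $T^*\varepsilon_{2,\mathbb{C}}=\varepsilon_{1,\mathbb{C}}$.

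Finally I would combine these. Given $A_2\in\mathfrak{A}(V_{2,\mathbb{C}})$, set $A_1:=T^*A_2\in\mathfrak{A}(V_{1,\mathbb{C}})$; the naturality identity and the vanishing established by complexification give $P_{2\ell,\mathcal{C},\varepsilon_2}(A_2)=P_{2\ell,\mathcal{C},\varepsilon_1}(A_1)=0$. In particular $P_{2\ell,\mathcal{C},\varepsilon_2}$ vanishes on the real subspace $\mathfrak{A}(V_2)\subset\mathfrak{A}(V_{2,\mathbb{C}})$, which is the forward implication; the reverse follows by interchanging the roles of the two spaces. The step I expect to require the most care is the naturality identity $P_{2\ell,\sigma,\tilde\varepsilon}(T^*A)=P_{2\ell,\sigma,\varepsilon}(A)$: one must track that the inverse form $\varepsilon^{ij}$ transforms contravariantly while the components $A_{ijkl}$ transform covariantly, so that all Jacobian factors arising from $T$ cancel in the total contraction, and one must confirm that this bookkeeping is insensitive to whether the scalars are real or complex.
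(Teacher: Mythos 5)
Your proposal is correct and takes essentially the same route as the paper: both arguments complexify, use that a real polynomial vanishing on all real points vanishes identically (the paper phrases this via the identity theorem for holomorphic functions, you via Zariski density), and exploit that over $\mathbb{C}$ all non-degenerate symmetric bilinear forms are equivalent. The only difference is packaging: the paper realizes the change of signature by constructing the real form $V(p,q)=\operatorname{Span}_{\mathbb{R}}\{\sqrt{-1}\,e_1,\dots,\sqrt{-1}\,e_p,e_{p+1},\dots,e_m\}$ inside the complexification of a fixed positive definite space, so that both real spaces share one complexification, whereas you connect two separately complexified spaces by a diagonal complex isometry with entries in $\{1,i\}$ together with the change-of-basis identity --- the same mechanism in different clothing.
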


\begin{remark}\label{rmk-1.1}
\rm Let $P_{2\ell,\varepsilon}\in\mathcal{J}_{2\ell}(V,\varepsilon)$ be a polynomial invariant of degree $2\ell$ which is
defined for an inner product space $(V,\varepsilon)$ of signature $(p,q)$ in dimension $m=p+q$. By
Theorem~\ref{thm-1.2}, there is a collection of constants $\mathcal{C}$ so that may express:
$$
   P_{2\ell,\varepsilon}(A)=\sum_{\sigma\in\operatorname{Perm}(4\ell)} c_\sigma P_{2\ell,\sigma,\varepsilon}(A)
\ \forall\ A\in\mathfrak{A}(V)\,.
$$
If $(V_1,\varepsilon_1)$ is an inner product space of signature $(p_1,q_1)$ where $p_1+q_1=m$, we let
$$P_{2\ell,\varepsilon_1}(A_1):=\sum_{\sigma\in\operatorname{Perm}(4\ell)} c_\sigma P_{2\ell,\sigma,\varepsilon_1}(A_1)
\ \forall\ A_1\in\mathfrak{A}(V_1)\,.$$
If we choose another collection of constants $\tilde{\mathcal{C}}$ so that
$$P_{2\ell,\varepsilon}(A)=\sum_{\sigma\in\operatorname{Perm}(4\ell)} \tilde c_\sigma P_{2\ell,\sigma,\varepsilon}(A)
\ \forall\ A\in\mathfrak{A}(V)\,,$$
then
$$\sum_{\sigma\in\operatorname{Perm}(4\ell)}(c_\sigma-\tilde c_\sigma)P_{2\ell,\sigma,\varepsilon}(A)=0
\ \forall\ A\in\mathfrak{A}(V)\,.$$
Hence, by Theorem~\ref{thm-1.3},$$\sum_{\sigma\in\operatorname{Perm}(4\ell)}(c_\sigma-\tilde
c_\sigma)P_{2\ell,\sigma,\varepsilon_1}(A_1)=0\ \forall\
A_1\in\mathfrak{A}(V_1)\,.$$Thus we may conclude that we may also express
$$
P_{2\ell,\varepsilon_1}(A_1)=\sum_{\sigma\in\operatorname{Perm}(4\ell)}\tilde c_\sigma
P_{2\ell,\sigma,\varepsilon_1}(A_1)\ \forall\ A_1\in\mathfrak{A}(V_1)\,.
$$
This shows that we can
regard the collection $\{P_{2\ell,\varepsilon}\}$ as being defined for any inner product space of dimension $m$;
we shall denote this space of invariants by $\mathcal{J}_{2\ell}(m)$. The elements of $\mathcal{J}_{2\ell}(m)$
are functions; the elements $P_{2\ell,\sigma,\varepsilon}$ are algebraic objects which define functions, but
(as noted above) different elements $P_{2\ell,\sigma,\varepsilon}$ can define the same function.\end{remark}

\subsection{The restriction of scalar invariants}\label{sect-1.5} We wish to relate the spaces $\mathcal{J}_{2\ell}(m)$ and
$\mathcal{J}_{2\ell}(m-1)$ by defining a restriction map
$r:\mathcal{J}_{2\ell}(m)\rightarrow\mathcal{J}_{2\ell}(m-1)$. We work in the geometric context for the moment;
Theorem~\ref{thm-1.1} permits us to then pass to the algebraic context. Let
$P\in\mathcal{J}_{2\ell}(p,q)$ where $p>0$ and let $(N,g_N)$ be a pseudo-Riemannian manifold of signature $(p-1,q)$. Let
$M:=N\times S^1$ and let $g_M:=g_N-d\theta^2$ where $\theta$ is the usual periodic parameter on the circle $S^1$.Then $(M,g_M)$ has signature $(p,q)$ and we set$$r(P)(N,g_N)(\xi):=P(M,g_M)(\xi,\theta_0)$$
for any $\theta_0\in S^1$, the particular point $\theta_0$ being irrelevant as the circle is a homogeneous space.
This defines a map
$$
r_-:\mathcal{J}_{2\ell}(p,q)\rightarrow\mathcal{J}_{2\ell}(p-1,q)\quad\text{for}\quad p>0\,.
$$
If $q>0$ and if $(N,g_N)$ has signature $(p,q-1)$, we may consider $g_M:=g_N+d\theta^2$ to define a similar restriction
map
$$
r_+:\mathcal{J}_{2\ell}(p,q)\rightarrow\mathcal{J}_{2\ell}(p,q-1)\quad\text{for}\quad q>0\,.
$$

Let $\varepsilon(p,q)$ have signature $(p,q)$. The invariants $P_{2\ell,\sigma,\varepsilon(p,q)}$ are defined by
summations in Equation~(\ref{eqn-1.b}) that range from $1$ to $m$. The product metric $g_N\pm d\theta^2$ is flat in the
final coordinate and thus $r_\pm P_{2\ell,\sigma,\varepsilon(p,q)}$ is defined by summations which range from $1$ to
$m-1$. Consequently
\begin{eqnarray*}
&&r_-(P_{2\ell,\sigma,\varepsilon(p,q))})=P_{2\ell,\sigma,\varepsilon(p-1,q)}\quad\text{if}\quad p>0\,,\\
&&r_+(P_{2\ell,\sigma,\varepsilon(p,q))})=P_{2\ell,\sigma,\varepsilon(p,q-1)}\quad\text{if}\quad q>0\,.
\end{eqnarray*}
Consequently, we can regard $r_\pm$ as defining a unified and universally defined map
$$r:\mathcal{J}_{2\ell}(m)\rightarrow\mathcal{J}_{2\ell}(m-1)\,.$$
Thus, for example, the scalar curvature in
dimension $m$ restricts naturally to the scalar curvature in dimension $m-1$; it is universally
defined. It is necessary to first give a geometric definition and then invoke Theorem~\ref{thm-1.1} to ensure that
the subsequent algebraic characterization is well defined by showing
\begin{eqnarray*}
&&\sum_{\sigma\in\operatorname{Perm}(4\ell)}c_\sigma P_{2\ell,\sigma,\varepsilon_m}(A_m)=0\ \forall\
A_m\in\mathfrak{A}(V_m)\\
&\Rightarrow&\sum_{\sigma\in\operatorname{Perm}(4\ell)}c_\sigma P_{2\ell,\sigma,\varepsilon_{m-1}}(A_{m-1})=0\ \forall\
A_{m-1}\in\mathfrak{A}(V_{m-1})
\end{eqnarray*}
where $(V_m,\varepsilon_m)$ and $(V_{m-1},\varepsilon_{m-1})$ are arbitrary inner product spaces of dimensions $m$ and
$m-1$, respectively.

The following result
follows from the discussion given above:
\begin{lemma}
Let $\mathcal{C}=\{c_\sigma\}_{\sigma\in\operatorname{Perm}(4\ell)}$ be a collection of real constants which defines an
element $P_{2\ell,\mathcal{C},m}\in\mathcal{J}_{2\ell}(m)$. Then $r(P_{2\ell,\mathcal{C},m})=P_{2\ell,\mathcal{C},m-1}$.
Furthermore, if
$P_{2\ell,\mathcal{C},m}$ is a universal curvature identity in dimension $m$, then $r(P_{2\ell,\mathcal{C},m})$ is a
universal curvature identity in dimension $m-1$.
\end{lemma}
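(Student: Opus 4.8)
The plan is to deduce both assertions from the explicit basis formulas $r_-(P_{2\ell,\sigma,\varepsilon(p,q)})=P_{2\ell,\sigma,\varepsilon(p-1,q)}$ and $r_+(P_{2\ell,\sigma,\varepsilon(p,q)})=P_{2\ell,\sigma,\varepsilon(p,q-1)}$ recorded in the preceding discussion, together with the linearity of the restriction map and the geometric realization guaranteed by Theorem~\ref{thm-1.1}. First I would observe that $r$ is linear: since it is defined pointwise by evaluation on the product manifold via $r(P)(N,g_N)(\xi)=P(M,g_M)(\xi,\theta_0)$, linearity in $P$ is immediate from the linearity of evaluation. Given this, the identity $r(P_{2\ell,\mathcal{C},m})=P_{2\ell,\mathcal{C},m-1}$ follows by writing $P_{2\ell,\mathcal{C},m}=\sum_\sigma c_\sigma P_{2\ell,\sigma,\varepsilon_m}$, applying $r$ term by term, invoking the two basis relations above, and collecting the result as $\sum_\sigma c_\sigma P_{2\ell,\sigma,\varepsilon_{m-1}}=P_{2\ell,\mathcal{C},m-1}$.

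For the second assertion, suppose $P_{2\ell,\mathcal{C},m}(A_m)=0$ for all $A_m\in\mathfrak{A}(V_m)$; I must show $P_{2\ell,\mathcal{C},m-1}(A_{m-1})=0$ for all $A_{m-1}\in\mathfrak{A}(V_{m-1})$. I would fix an arbitrary $A_{m-1}$ and, by Theorem~\ref{thm-1.1}, realize it geometrically at a point $\xi$ of a compact pseudo-Riemannian manifold $(N,g_N)$ of signature $(p-1,q)$ (or $(p,q-1)$). Forming the product $(M,g_M)=(N\times S^1,\,g_N\pm d\theta^2)$, which is flat in the circle direction, the curvature tensor $R_M$ at $(\xi,\theta_0)$ is the \emph{suspension} of $A_{m-1}$, vanishing whenever any index lies along the circle factor. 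Because $P_{2\ell,\mathcal{C},m}$ vanishes on all of $\mathfrak{A}(V_m)$, it vanishes at $R_M$; and by the defining relation for $r$ together with the first part, $P_{2\ell,\mathcal{C},m-1}(A_{m-1})=r(P_{2\ell,\mathcal{C},m})(N,g_N)(\xi)=P_{2\ell,\mathcal{C},m}(R_M)=0$. Since $A_{m-1}$ was arbitrary, $r(P_{2\ell,\mathcal{C},m})$ is a universal curvature identity in dimension $m-1$.

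The main obstacle is the last step's passage between the algebraic and geometric pictures: one must know that every $A_{m-1}\in\mathfrak{A}(V_{m-1})$ genuinely arises as the lower-dimensional part of some product curvature, so that vanishing on product geometries forces vanishing on all of $\mathfrak{A}(V_{m-1})$. This is precisely where Theorem~\ref{thm-1.1} is indispensable: without a realizability result the restricted identity would be controlled only on the (possibly proper) subspace of curvature tensors arising from product metrics, and the conclusion would fail. Once realizability is in hand, the remaining verification that $P_{2\ell,\sigma,\varepsilon_m}$ evaluated on a suspension equals $P_{2\ell,\sigma,\varepsilon_{m-1}}$ on the original tensor is the routine index bookkeeping already subsumed in the preceding discussion, since orthogonality of the product forces every contraction $\varepsilon_m^{\,i_ai_b}$ touching the circle index to vanish unless both slots are the circle index, in which case the associated curvature factor vanishes.
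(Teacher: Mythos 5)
Your proposal is correct and follows essentially the same route as the paper: the paper's ``proof'' is precisely the preceding discussion, namely the basis relations $r_\pm(P_{2\ell,\sigma,\varepsilon(p,q)})=P_{2\ell,\sigma,\varepsilon(p\mp1,q)}$ (or $(p,q\mp1)$) coming from flatness of the product metric in the circle direction, linearity of $r$, and the invocation of Theorem~\ref{thm-1.1} to pass from vanishing on geometrically realized (product) curvature tensors to vanishing on all of $\mathfrak{A}(V_{m-1})$. You correctly single out geometric realizability as the indispensable ingredient, exactly as the paper does.
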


\subsection{The Pfaffian}
Define $E_{2\ell, m,\varepsilon}\in\mathcal{J}_{2\ell}(V,\varepsilon)$ by
setting:
\begin{eqnarray*}
&&\ E_{2\ell, m, \varepsilon}:=\frac1{(8\pi)^\ell
\ell!}\sum_{i_1,...,i_\ell,j_1,...,j_\ell=1}^{{\
m}}{{A_{i_1i_2j_2j_1}...A_{i_{\ell-1}i_\ell j_\ell j_{\ell-1}}}}\\
&&\qquad\qquad\qquad\qquad\qquad\qquad\qquad\
\times\varepsilon^*(e^{i_1}\wedge...\wedge
e^{i_\ell},e^{j_1}\wedge...\wedge e^{j_\ell})
\end{eqnarray*}
 where by definition one sets:
$$\varepsilon^*(e^{i_1}\wedge...\wedge
e^{i_{\ell}},e^{j_1}\wedge...\wedge
e^{j_{\ell}}):=\det\left(\begin{array}{lll}
\varepsilon^*(e^{i_1},e^{j_1})&\dots&\varepsilon^*(e^{i_1},e^{{{j_\ell}}})\\
\dots&\dots&\dots\\
\varepsilon^*(e^{{i_\ell}},e^{j_1})&\dots&
     \varepsilon^*(e^{{i_\ell}},e^{{{j_\ell}}})\end{array}\right)\,.
$$
It is then immediate that
$$
   r(E_{2\ell,m,\varepsilon})=E_{2\ell,m-1,\varepsilon}\,.
$$
Thus, in particular, this invariant is universal and will be denoted
by $E_{2\ell}$ when no confusion is likely to result. For example,
\begin{equation}\label{eqn-1.e}
E_2(A)=\frac{\tau(A)}{4\pi}\quad\text{and}\quad
E_4(A)=\frac{\tau(A)^{2}-4|\rho(A)|^2+|A|^2}{32\pi^2}\,.
\end{equation}
\begin{theorem}\label{thm-1.4}
\ \begin{enumerate}
\item $r:\mathcal{J}_{2\ell}(m)\rightarrow\mathcal{J}_{2\ell}(m-1)$ is surjective for any $m$.
\item If $m>2\ell$, then $r:\mathcal{J}_{2\ell}(m)\rightarrow\mathcal{J}_{2\ell}(m-1)$ is injective
\item If $m=2\ell$, then
$\ker\left\{r:\mathcal{J}_{2\ell}(m)\rightarrow\mathcal{J}_{2\ell}(m-1)\right\}=\operatorname{Span}\{E_{2\ell,m}\}$.
\end{enumerate}
\end{theorem}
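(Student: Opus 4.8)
The plan is to treat the three assertions in increasing order of difficulty, using throughout the geometric description of $r$. Since $g_M=g_N\pm d\theta^2$ is a product metric with a flat circle factor, the curvature tensor of $M$ at a point is the curvature tensor of $N$ extended by zero in the orthogonal, nondegenerate $\partial_\theta$ direction; passing to the algebraic setting via Theorem~\ref{thm-1.1}, $r(P)$ evaluated on $A\in\mathfrak{A}(V_{m-1})$ is $P$ evaluated on the extension of $A$ that vanishes whenever an index equals $m$. Hence $\ker r$ consists precisely of those $P\in\mathcal{J}_{2\ell}(m)$ that vanish on every curvature tensor supported on a fixed nondegenerate hyperplane, and by $\mathcal{O}(V,\varepsilon)$-invariance on every curvature tensor supported on any nondegenerate hyperplane. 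By Theorem~\ref{thm-1.3} I may assume $\varepsilon$ is positive definite, so every subspace is nondegenerate and every subspace of dimension $<m$ lies in a nondegenerate hyperplane. Assertion~(1) is then immediate: given $Q=\sum_\sigma c_\sigma P_{2\ell,\sigma,\varepsilon(m-1)}$, the element $P:=\sum_\sigma c_\sigma P_{2\ell,\sigma,\varepsilon(m)}$ satisfies $r(P)=Q$ by the identity $r(P_{2\ell,\sigma,\varepsilon(m)})=P_{2\ell,\sigma,\varepsilon(m-1)}$ established above.

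For assertion~(2) the engine is a support reduction built on a spanning lemma. The quadratic map $S\mapsto A_S$, where $A_S(x,y,z,w):=S(x,w)S(y,z)-S(x,z)S(y,w)$, carries symmetric $2$-tensors into $\mathfrak{A}(V)$ (it is, up to a factor, the Kulkarni--Nomizu square of $S$); since $\mathfrak{A}(V)$ is the irreducible $\mathrm{GL}(V)$-module $\mathbb{S}_{(2,2)}(V^*)$ and $S\mapsto A_S$ is a nonzero $\mathrm{GL}(V)$-equivariant map into it, the tensors $A_S$ span $\mathfrak{A}(V)$. Polarizing $P$ to a symmetric $\ell$-linear form $\tilde P$ on $\mathfrak{A}(V)$, it suffices to show $\tilde P(A_{S_1},\dots,A_{S_\ell})=0$. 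Diagonalizing each $S_i$ in an orthonormal eigenbasis and expanding multilinearly, each slot becomes a sum of terms $B_i$ supported on a $2$-plane $\pi_i$, so $\tilde P(B_1,\dots,B_\ell)$ involves only directions in $\pi_1+\cdots+\pi_\ell$, a subspace of dimension at most $2\ell$. When $m>2\ell$ this lies in a nondegenerate hyperplane, which I move to $e_m^\perp$ by invariance; since $P\in\ker r$ the term vanishes. Hence $\tilde P\equiv 0$, so $P=0$ and $r$ is injective.

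For assertion~(3) I first note $E_{2\ell,2\ell}\in\ker r$: indeed $r(E_{2\ell,2\ell})=E_{2\ell,2\ell-1}$, and $E_{2\ell,2\ell-1}=0$ because the Pfaffian is a complete antisymmetrization over $2\ell$ indices and cannot be supported on the $2\ell-1$ available directions; as $E_{2\ell,2\ell}\neq0$ (it represents the Euler class), this gives $\ker r\supseteq\operatorname{Span}\{E_{2\ell,2\ell}\}$, the right-hand side being one-dimensional. For the reverse inclusion I rerun the support reduction at $m=2\ell$: any $P\in\ker r$ is determined by the values $\tilde P(B_1,\dots,B_\ell)$ with $B_i$ supported on a $2$-plane $\pi_i$, and such a value vanishes unless $\pi_1\oplus\cdots\oplus\pi_\ell=V$. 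Writing $B_i=\kappa_i\Omega_i$ with $\Omega_i$ the canonical generator of the one-dimensional space $\mathfrak{A}(\pi_i)$, the value equals $\kappa_1\cdots\kappa_\ell\,F(\pi_1,\dots,\pi_\ell)$ for an $\mathcal{O}(V)$-invariant function $F$ of the plane configuration. Transitivity of $\mathcal{O}(2\ell)$ on orthogonal decompositions $V=\pi_1\perp\cdots\perp\pi_\ell$ makes $F$ constant there, and the same computation for $E_{2\ell,2\ell}$ yields a nonzero constant; thus $P$ and a suitable multiple of $E_{2\ell,2\ell}$ agree on all orthogonal $2$-plane decompositions.

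The main obstacle is exactly the final step: upgrading agreement on orthogonal decompositions to agreement on the full spanning locus so as to conclude $\dim\ker r\le 1$, since orthogonal decompositions form a proper subvariety of all spanning configurations. I would resolve this either by a connectedness argument showing the invariant $F$ is forced to coincide with the Pfaffian's throughout the spanning locus, or, more robustly, by the rank--nullity identity $\dim\mathcal{J}_{2\ell}(2\ell)-\dim\mathcal{J}_{2\ell}(2\ell-1)=\dim\ker r$: combined with surjectivity from~(1) and the nonzero kernel element $E_{2\ell,2\ell}$, the claim reduces to the single inequality $\dim\ker r\le1$. This dimension-specific bound is the genuinely hard input, reflecting that at the critical dimension the only degree-$\ell$ curvature invariant concentrated on the top stratum is the Pfaffian.
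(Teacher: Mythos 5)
Your reduction to the positive definite case via Theorem~\ref{thm-1.3}, and your proofs of assertions (1) and (2), are correct. For comparison: the paper's own proof consists \emph{only} of that reduction step (via Theorem~\ref{thm-1.2} and Theorem~\ref{thm-1.3}) followed by a citation of \cite{GPS11} for the Riemannian case, so everything beyond the reduction is extra content on your part. Your argument for (2) --- spanning $\mathfrak{A}(V)$ by the Kulkarni--Nomizu squares $A_S$ (valid, since the span of $\{A_S\}$ is a nonzero $\mathrm{GL}(V)$-invariant subspace of the irreducible module $\mathfrak{A}(V)$), polarizing, splitting each $A_{S_i}$ into $2$-plane supported pieces, and killing each term because its total support has dimension at most $2\ell<m$ and hence lies in a hyperplane that invariance lets you move to $e_m^\perp$ --- is a legitimate self-contained substitute for that citation.

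Assertion (3), however, has a genuine gap, and it is exactly the one you flag yourself. Knowing that $P\in\ker r$ agrees with $c\,E_{2\ell,2\ell}$ on tuples $(B_1,\dots,B_\ell)$ supported on mutually \emph{orthogonal} $2$-planes does not determine $P$: the spanning tuples produced by your own reduction live on arbitrary direct-sum decompositions $V=\pi_1\oplus\cdots\oplus\pi_\ell$, and the space of such configurations modulo $\mathcal{O}(2\ell)$ has positive dimension (the principal angles between the planes are continuous invariants), so an invariant multilinear functional can vanish on the orthogonal locus without vanishing identically; ruling that out is precisely the assertion $\dim\ker r\le 1$. Your first proposed repair (a connectedness argument for $F$) is not carried out, and your second is circular: the rank--nullity identity merely restates $\ker r=\operatorname{Span}\{E_{2\ell,2\ell}\}$ as $\dim\mathcal{J}_{2\ell}(2\ell)-\dim\mathcal{J}_{2\ell}(2\ell-1)\le 1$, and neither you nor the paper computes these dimensions independently. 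So as written you have only the easy inclusion $\operatorname{Span}\{E_{2\ell,2\ell}\}\subseteq\ker r$; the hard inclusion is exactly the Riemannian input the paper outsources to \cite{GPS11}, and it requires either that citation or a genuinely new argument (for instance, an index-counting analysis showing every coordinate index appears exactly twice in every monomial of a kernel element, followed by an identification of the resulting invariant with the Pfaffian).
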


\begin{proof} The discussion above using Theorem~\ref{thm-1.2} and Theorem~\ref{thm-1.3} shows that it suffices to prove
this result in the positive definite setting; this was done previously in \cite{GPS11}.
\end{proof}

\begin{remark}\label{rmk-1.2}
\rm We have $E_4(A)=\frac1{32\pi^2}(\tau(A)^2-4|\rho(A)|^2+|A|^2)$; $E_4$
is non-zero on
$(S^4,g_0)$ where $g_0$ is the round metric on the unit sphere $S^4$ in $\mathbb{R}^5$.
This invariant vanishes identically on any $3$-dimension pseudo-Riemannian manifold and thus provides
the only quadratic universal curvature identity (module rescaling) in dimension 3 which does not hold in
dimension 4.
\end{remark}

\subsection{The Chern-Gauss-Bonnet Formula}
In addition to being (up to scaling) the only universal curvature identity in dimension $2\ell-1$
which is non-trivial in dimension $2\ell$, the invariants
$E_{2\ell}$ are the integrands of the Chern-Gauss-Bonnet formula. Let
$\chi(M)$ be the Euler-Poincar\'e characteristic of a compact manifold $M$; since $\chi(M)=0$ if
$m=\dim(M)$ is odd we may suppose that $m=2\ell$ is even. Let
$$\left|\operatorname{dvol}\right|(g):=\left|\det(g_{ij})\right|^{1/2}dx^1\cdot\cdot\cdot dx^m$$
denote the Riemannian element of volume. We refer to \cite{C44}
for the proof of the following result in the Riemannian (positive definite) setting and to \cite{C63}
for the general case:
\begin{theorem}\label{thm-1.5}
Let $(M,g)$ be a compact pseudo-Riemannian manifold of dimension
$2\ell$ with empty boundary. Then
$$\int_M E_{2\ell}(M,g)\left|\operatorname{dvol}\right|(g)=\chi(M)\,.$$
\end{theorem}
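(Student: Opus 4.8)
The plan is to realise the integrand $E_{2\ell}(M,g)\,|\operatorname{dvol}|(g)$ as a Chern--Weil representative of the Euler class of $TM$ and then to evaluate the resulting characteristic number by the Poincar\'e--Hopf theorem, thereby reducing matters to the positive definite case treated by Chern \cite{C44}. First I would record the pointwise identity
$$
E_{2\ell}(M,g)\,|\operatorname{dvol}|(g)=\operatorname{Pf}\!\left(\tfrac{1}{2\pi}\Omega^g\right),
$$
where $\Omega^g$ is the curvature $2$-form of the Levi--Civita connection, regarded via $g$ as taking values in skew-symmetric endomorphisms, and $\operatorname{Pf}$ is the Pfaffian. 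This is a purely local computation: in an oriented orthonormal frame the contractions against $\varepsilon^*$ in the definition of $E_{2\ell}$ pair the lowered curvature components exactly as the Levi--Civita symbol does in the Pfaffian, while the density $|\operatorname{dvol}|(g)=|\det g_{ij}|^{1/2}\,dx^1\cdots dx^m$ converts the orthonormal-frame Pfaffian into its coordinate form. The normalisation $1/((8\pi)^\ell\ell!)$ is chosen precisely so that this identity carries no spurious constant, as one checks in dimension two where $E_2=\tau/(4\pi)$ by Equation~(\ref{eqn-1.e}).

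Next I would pass from the form to its cohomology class. By the second Bianchi identity together with the $\operatorname{Ad}$-invariance of the Pfaffian, $\operatorname{Pf}(\tfrac{1}{2\pi}\Omega^g)$ is closed, and I claim its de Rham class is the Euler class $e(TM)$. Since the Levi--Civita connection is a metric connection, the frame bundle reduces to $SO_0(p,q)$; after replacing $M$ by a finite cover if necessary to guarantee orientability in both the spacelike and timelike directions (which multiplies both sides of the desired identity by the degree of the cover and is therefore harmless), a maximal compact subgroup $SO(p)\times SO(q)\hookrightarrow SO_0(p,q)$ is a deformation retract, so $BSO_0(p,q)\simeq B(SO(p)\times SO(q))$ and the metric splits $TM$ topologically as $E_+\oplus E_-$, with $g$ positive definite on the rank $p$ bundle $E_+$ and negative definite on the rank $q$ bundle $E_-$. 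Under this splitting the Pfaffian factorises through $e(E_+)\cup e(E_-)=e(E_+\oplus E_-)=e(TM)$, once one tracks the sign $(-1)^{q/2}$ produced on $E_-$ by passing from $g|_{E_-}$ to the positive definite $-g|_{E_-}$ and checks that it is cancelled by the corresponding sign contributed by $\varepsilon^*$ on $\Lambda^\ell V^*$. As a consistency check, if $p$ (hence also $q$) is odd then one of the factors $e(E_\pm)$ is $2$-torsion, so $\chi(M)=0$ and both sides of the formula vanish, in agreement with the classical fact that a compact manifold admitting such a metric has vanishing Euler characteristic.

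Finally, with the class identified, $\int_M E_{2\ell}(M,g)\,|\operatorname{dvol}|(g)=\langle e(TM),[M]\rangle=\chi(M)$ by Poincar\'e--Hopf, which completes the argument; the evaluation on each definite factor is exactly Chern's Riemannian Gauss--Bonnet theorem \cite{C44}, so that only the product structure and the signs are genuinely new, in accordance with \cite{C63}. I expect the main obstacle to be the middle step: verifying that the Chern--Weil class of the indefinite Pfaffian is literally $e(TM)$, rather than merely proportional to it or off by a global sign. The homotopy equivalence $BSO_0(p,q)\simeq B(SO(p)\times SO(q))$ is clean, but the bookkeeping of the $(-1)^{q/2}$ signs, and the verification that the definition of $E_{2\ell}$ through the indefinite inner product $\varepsilon^*$ absorbs them exactly, is the delicate point that distinguishes the pseudo-Riemannian statement from its Riemannian antecedent.
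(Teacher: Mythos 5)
The paper offers no proof of Theorem~\ref{thm-1.5}: the result is quoted, with \cite{C44} cited for the positive definite case and \cite{C63} for the general one. Your proposal therefore stands or falls on its own merits. Its skeleton --- realize $E_{2\ell}\,|\operatorname{dvol}|$ as the Chern--Weil Pfaffian form of the Levi--Civita connection, reduce the structure group to the maximal compact $SO(p)\times SO(q)$ so that $TM\cong E_+\oplus E_-$, pass to finite covers to handle orientability, and apply the Whitney product formula --- is the standard modern route to the indefinite Gauss--Bonnet theorem, and those steps are sound in outline.

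The genuine gap sits exactly where you flag ``the delicate point'' and then assert success: the signs do \emph{not} cancel. Keep your convention that $E_-$ has rank $q$, i.e.\ $q$ is the number of timelike directions. On the $E_-$ block one indeed picks up $\operatorname{Pf}(-X)=(-1)^{q/2}\operatorname{Pf}(X)$; but the compensating sign you expect from $\varepsilon^*$ on the top exterior power is not there. In an $\varepsilon$-orthonormal frame in dimension $2\ell$,
$$
\varepsilon^*(e^{i_1}\wedge\cdots\wedge e^{i_{2\ell}},\,e^{j_1}\wedge\cdots\wedge e^{j_{2\ell}})
=\det\left(\varepsilon^{i_aj_b}\right)
=(-1)^q\,\epsilon^{i_1\cdots i_{2\ell}}\,\epsilon^{j_1\cdots j_{2\ell}}
$$
(with $\epsilon^{\cdots}$ the Levi--Civita symbols), and $(-1)^q=+1$ in the only case where anything is at stake, namely $p$ and $q$ both even; if both are odd, the rational Euler classes $e(E_\pm)$ are zero, so $\chi(M)=0$, and the Chern--Weil class vanishes as well, so both sides are $0$. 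Carrying your argument through honestly therefore yields
$$
\int_M E_{2\ell}(M,g)\left|\operatorname{dvol}\right|(g)=(-1)^{q/2}\,\chi(M)\qquad(p,q\ \text{even}),
$$
and the factor $(-1)^{q/2}$ is irremovable. Concretely, take $M=S^2\times S^2$ with $g=g_0\oplus(-g_0)$, of signature $(2,2)$: since $g\mapsto-g$ preserves the Levi--Civita connection but reverses the sign of the $(0,4)$-curvature tensor, one finds $\tau=0$, $|\rho|^2=4$, $|A|^2=8$, hence by Equation~(\ref{eqn-1.e}) $E_4\equiv-\frac{1}{4\pi^2}$, so $\int_M E_4\,|\operatorname{dvol}|=-4$ while $\chi(M)=4$. (Still simpler: $E_2=\tau/4\pi$ changes sign under $g\mapsto-g$, while $\chi(S^2)=2$ does not.) So no amount of care closes this gap; rather, the computation shows the unsigned identity is false whenever $q\equiv2\pmod 4$ and $\chi(M)\neq0$, and the statement you were asked to prove should itself carry the factor $(-1)^{q/2}$ (equivalently, be restricted to signatures with $q\equiv0\pmod 4$, which includes the Riemannian case, or to the odd--odd signatures where both sides vanish). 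Note that this does not disturb the use made of Theorem~\ref{thm-1.5} in Section~\ref{sect-5}: there one only needs that $\int_M E_{2\ell}\,|\operatorname{dvol}|$ is unchanged under deformations of $g$, which holds for $\pm\chi(M)$ alike.
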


\subsection{Symmetric $2$-tensor valued invariants}
Let $S^2(V^*)$ denote the space of symmetric $2$-cotensors. Let
$\mathcal{J}_{2\ell}^{(2)}(V,\varepsilon)$ denote the set of all $\mathcal{O}(V,\varepsilon)$ equivariant maps
from
$\otimes^\ell(\mathfrak{A}(V))$ to $S^2(V^*)$ or, equivalently, polynomials of degree $2\ell$ in the
components of $A\in\mathfrak{A}(V)$ which are $S^2(V^*)$ valued and invariantly defined. We can extend
Theorem~\ref{thm-1.2}, Theorem~\ref{thm-1.3}, and Theorem~\ref{thm-1.4} to this setting. We adopt the
following notational conventions. There are two fundamental invariants. If
$\eta\in\otimes^{4\ell}(V^*)$, define $Q_{2\ell,1,\varepsilon}^{(2)}\in\mathcal{J}_{2\ell}^{(2)}(V,\varepsilon)$ and
$Q_{2\ell,2,\varepsilon}^{(2)}\in\mathcal{J}_{2\ell}^{(2)}(V,\varepsilon)$ by setting:\begin{eqnarray*}
&&Q_{2\ell,1,\varepsilon}^{(2)}(\eta):=\varepsilon^{i_1i_2}...\varepsilon^{i_{4\ell-3}i_{4\ell-2}}\eta_{i_1i_2...i_{4\ell-1}i_{4\ell}}e^{i_{4\ell-1}}\circ e^{i_{4\ell}},\\&&Q_{2\ell,2,\varepsilon}^{(2)}(\eta):=\varepsilon^{i_1i_2}...\varepsilon^{i_{4\ell-1}i_{4\ell}}
\eta_{i_1i_2...i_{4\ell-1}i_{4\ell}}\varepsilon^*\,.
\end{eqnarray*}
If $\sigma\in\operatorname{Perm}(4\ell)$, then as before, we shall define
$$
\begin{array}{ll}
Q_{2\ell,1,\sigma,\varepsilon}^{(2)}:=Q_{2\ell,1,\varepsilon}^{(2)}\circ\sigma,&\left.
P_{2\ell,1,\sigma,\varepsilon}^{(2)}:=Q_{2\ell,1,\sigma,\varepsilon}^{(2)}\right|_{\otimes^\ell(\mathfrak{A}(V))},\\
Q_{2\ell,2,\sigma,\varepsilon}^{(2)}:=Q_{2\ell,2,\varepsilon}^{(2)}\circ\sigma,&\left.
P_{2\ell,1,\sigma,\varepsilon}^{(2)}:=Q_{2\ell,1,\sigma,\varepsilon}^{(2)}\right|_{\otimes^\ell(\mathfrak{A}(V))}.
\vphantom{\vrule height 16pt}\end{array}
$$
We will establish the following extension of Theorem~\ref{thm-1.2} in
Section~\ref{sect-3}:

\begin{theorem}\label{thm-1.6}
$\mathcal{J}_{2\ell}^{(2)}(V,\varepsilon)=
\operatorname{Span}_{\sigma\in\operatorname{Perm}(4\ell)}
\left\{P^{(2)}_{2\ell,1,\sigma,\varepsilon},P^{(2)}_{2\ell,2,\sigma,\varepsilon}\right\}$.
\end{theorem}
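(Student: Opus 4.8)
The plan is to mimic the proof of Theorem~\ref{thm-1.2}, reducing the $S^2(V^*)$-valued problem to a scalar invariant-theory problem to which the Theorem of Invariants of H. Weyl applies. The key observation is that an $S^2(V^*)$-valued equivariant map carries the same information as a scalar invariant which additionally depends bilinearly and symmetrically on a pair of vectors. First I would set up this reduction. Let $F\in\mathcal{J}^{(2)}_{2\ell}(V,\varepsilon)$ and let $x,y\in V$. Using the metric to lower indices, write $x^\flat,y^\flat\in V^*$ and define the scalar
$$\Psi(A;x,y):=F(A)(x,y)\,.$$
Then $\Psi$ is polynomial of degree $2\ell$ in the components of $A$, symmetric and bilinear in $(x,y)$, and the equivariance $F(T^*A)=T^*F(A)$ translates into the statement that $\Psi$ is an $\mathcal{O}(V,\varepsilon)$-invariant of the enlarged tensor $A^{\otimes\ell}\otimes x^\flat\otimes y^\flat\in\otimes^{4\ell+2}(V^*)$. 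Conversely, any such scalar determines a unique $F$, since an element of $S^2(V^*)$ is recovered from its values on pairs $(x,y)$; thus $F\mapsto\Psi$ is a linear isomorphism onto the space of bilinear symmetric scalar invariants.

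Next I would apply the Weyl machinery exactly as in the proof of Theorem~\ref{thm-1.2}, extended from $\otimes^{4\ell}(V^*)$ to $\otimes^{4\ell+2}(V^*)$. Because we work with the full orthogonal group $\mathcal{O}(V,\varepsilon)$, every invariant is a linear combination of complete contractions of $A^{\otimes\ell}\otimes x^\flat\otimes y^\flat$ using $2\ell+1$ copies of $\varepsilon^*$; no volume-form (determinant) terms enter, as those are needed only for the special orthogonal group. Hence $\Psi$ is a finite sum of terms $\left\{\otimes^{2\ell+1}(\varepsilon^*)\right\}\circ\tau$ applied to $A^{\otimes\ell}\otimes x^\flat\otimes y^\flat$ for suitable $\tau\in\operatorname{Perm}(4\ell+2)$.

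I would then organize these complete contractions according to how the two distinguished slots $x^\flat$ and $y^\flat$ are paired. If $x^\flat$ is contracted directly against $y^\flat$, the corresponding factor is $\varepsilon^*(x^\flat,y^\flat)=\varepsilon(x,y)$ and the remaining factor is a complete contraction of $A^{\otimes\ell}$, i.e. a scalar of the form $P_{2\ell,\sigma',\varepsilon}(A)$; translating back through the isomorphism $F\mapsto\Psi$, such a term is precisely a multiple of the metric, namely a term of the form $P^{(2)}_{2\ell,2,\sigma,\varepsilon}$. If instead $x^\flat$ and $y^\flat$ are each contracted against indices coming from curvature factors, then two curvature indices remain free and become the two free slots of the output symmetric tensor; such a term is precisely of the form $P^{(2)}_{2\ell,1,\sigma,\varepsilon}$. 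Since every complete contraction falls into one of these two cases, reassembling recovers $F$ as a linear combination of the $P^{(2)}_{2\ell,1,\sigma,\varepsilon}$ and $P^{(2)}_{2\ell,2,\sigma,\varepsilon}$, which is the assertion.

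The main obstacle is less the invariant theory itself, which is the same Weyl argument already invoked for Theorem~\ref{thm-1.2}, than the careful bookkeeping needed to make the correspondence between contraction patterns $\tau\in\operatorname{Perm}(4\ell+2)$ and the indexing permutations $\sigma\in\operatorname{Perm}(4\ell)$ defining $Q^{(2)}_{2\ell,1,\sigma,\varepsilon}$ and $Q^{(2)}_{2\ell,2,\sigma,\varepsilon}$ precise: one must track which indices are covariant and which have been raised or lowered by the metric, and verify that symmetrization in $(x,y)$ produces no further families. As in Theorem~\ref{thm-1.2}, the one technical point (cf. Lemma~\ref{lem-3.1}) is that one should first polarize $F$ so that Weyl's theorem is applied to a genuinely multilinear invariant on $\otimes^{4\ell+2}(V^*)$ before restricting back to $\otimes^\ell(\mathfrak{A}(V))$.
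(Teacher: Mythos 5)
Your proposal is correct and takes essentially the same route as the paper: both convert the $S^2(V^*)$-valued invariant into a scalar invariant on a subspace of $\otimes^{4\ell+2}(V^*)$ (you by evaluating $F(A)(x,y)$, the paper by contracting against a general $h\in S^2(V^*)$, which is equivalent since $\varepsilon^*(F(A),x^\flat\circ y^\flat)=F(A)(x,y)$), extend it using Lemma~\ref{lem-3.1} so that H.~Weyl's theorem applies on the full tensor space, and then sort the resulting complete contractions by whether the two distinguished indices are contracted against each other (giving the $P^{(2)}_{2\ell,2,\sigma,\varepsilon}$ terms) or against curvature indices (giving the $P^{(2)}_{2\ell,1,\sigma,\varepsilon}$ terms). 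The paper's two-case analysis of $\beta\in\operatorname{Perm}(4\ell+2)$ is exactly your bookkeeping of how $x^\flat$ and $y^\flat$ are paired, so there is no substantive difference.
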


If $
\mathcal{D}:=\{d_{1,\sigma}^{(2)},d_{2,\sigma}^{(2)}\}_{\sigma\in\operatorname{Perm}(4\ell)}
$
is a collection of real
constants, we set
\begin{equation}\label{eqn-1.f}
P_{2\ell,\mathcal{D},\varepsilon}^{(2)}:=\sum_{\sigma\in\operatorname{Perm}(4\ell)}\{d_{1,\sigma}^{(2)}P_{2\ell,1,\sigma,\varepsilon}^{(2)}+d_{2,\sigma}^{(2)}P_{2\ell,2,\sigma,\varepsilon}^{(2)}\}\end{equation}

We shall establish the following extension of
Theorem~\ref{thm-1.3} in
Section~\ref{sect-4}:

\begin{theorem}\label{thm-1.7}
Let $\mathcal{D}:=\{d_{1,\sigma}^{(2)},d_{2,\sigma}^{(2)}\}_{\sigma\in\operatorname{Perm}(4\ell)}$
be a collection of real constants. Let $(V_i,\varepsilon_i)$ be inner product spaces
of the same dimension $m=p_1+q_1=p_2+q_2$. Then
$$P_{2\ell,\mathcal{D},\varepsilon_1}^{(2)}(A_1)=0\ \forall\ A_1\in\mathfrak{A}(V_1)
\quad\Leftrightarrow\quad
P_{2\ell,\mathcal{D},\varepsilon_2}^{(2)}(A_2)=0\ \forall\ A_2\in\mathfrak{A}(V_2)\,.$$
\end{theorem}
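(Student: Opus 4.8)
The plan is to follow the proof of Theorem~\ref{thm-1.3} essentially verbatim, the one new feature being that the target of $P^{(2)}_{2\ell,\mathcal{D},\varepsilon}$ is the nontrivial $\mathcal{O}(V,\varepsilon)$-module $S^2(V^*)$ rather than the trivial module $\mathbb{R}$. First I would complexify. Let $V_{\mathbb{C}}:=V\otimes_{\mathbb{R}}\mathbb{C}$ and let $\varepsilon_{\mathbb{C}}$ be the $\mathbb{C}$-bilinear extension of $\varepsilon$. Since the relations in Equation~(\ref{eqn-1.a}) are linear with real coefficients, $\mathfrak{A}(V)$ is cut out of $\otimes^4(V^*)$ by a real linear map, so its complexification is exactly $\mathfrak{A}(V_{\mathbb{C}})$; likewise the complexification of $S^2(V^*)$ is $S^2(V_{\mathbb{C}}^*)$. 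The formulas in Equation~(\ref{eqn-1.f}) then make sense verbatim over $\mathbb{C}$ and define a holomorphic polynomial map $P^{(2)}_{2\ell,\mathcal{D},\varepsilon_{\mathbb{C}}}:\mathfrak{A}(V_{\mathbb{C}})\to S^2(V_{\mathbb{C}}^*)$ extending the real map.

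Next I would reduce the real vanishing statement to a complex one. Fixing a basis, I would regard each component of $P^{(2)}_{2\ell,\mathcal{D},\varepsilon}(A)$, relative to the basis $\{e^a\circ e^b\}$ of $S^2(V^*)$, as a scalar polynomial in the components of $A$. Because $\mathfrak{A}(V)$ is a real form of $\mathfrak{A}(V_{\mathbb{C}})$, a holomorphic polynomial vanishing on all of $\mathfrak{A}(V)$ must vanish identically on $\mathfrak{A}(V_{\mathbb{C}})$ (a polynomial vanishing on $\mathbb{R}^n$ is the zero polynomial). Applying this to each component, and noting the converse is trivial since $\mathfrak{A}(V)\subset\mathfrak{A}(V_{\mathbb{C}})$, yields
$$
P^{(2)}_{2\ell,\mathcal{D},\varepsilon}(A)=0\ \forall\,A\in\mathfrak{A}(V)
\quad\Longleftrightarrow\quad
P^{(2)}_{2\ell,\mathcal{D},\varepsilon_{\mathbb{C}}}(A)=0\ \forall\,A\in\mathfrak{A}(V_{\mathbb{C}})\,.
$$

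The final step is to pass between the two signatures. Any two nondegenerate symmetric $\mathbb{C}$-bilinear forms of the same rank $m$ are equivalent, so there is a $\mathbb{C}$-linear isometry $T:(V_1)_{\mathbb{C}}\to(V_2)_{\mathbb{C}}$ with $T^*(\varepsilon_2)_{\mathbb{C}}=(\varepsilon_1)_{\mathbb{C}}$; concretely $T$ is assembled from the real isometries on the common positive and negative parts together with scalings by $\sqrt{-1}$ on precisely those directions whose sign must be switched, which is the mechanism by which the signature changes. Because the formulas defining $P^{(2)}_{2\ell,\mathcal{D}}$ are built only from the inverse form $\varepsilon^{ij}$, the tensor $A$, and index contractions, they are manifestly natural; hence $T$ intertwines the two invariants. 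Writing $T_*$ for the isomorphisms induced by $T$ on curvature tensors and on symmetric $2$-cotensors, naturality gives
$$
P^{(2)}_{2\ell,\mathcal{D},(\varepsilon_2)_{\mathbb{C}}}(T_*A)=T_*\,P^{(2)}_{2\ell,\mathcal{D},(\varepsilon_1)_{\mathbb{C}}}(A)\,.
$$
Since $T_*$ is an isomorphism on both spaces, the left-hand side vanishes for all $A$ if and only if the right-hand side does, so the complex identity holds for $\varepsilon_1$ exactly when it holds for $\varepsilon_2$. Chaining this with the real--complex equivalence above, applied once for each of $(V_1,\varepsilon_1)$ and $(V_2,\varepsilon_2)$, proves the theorem.

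I expect the main obstacle to be the bookkeeping of the equivariance on the target module $S^2(V^*)$ that is absent in the scalar setting of Theorem~\ref{thm-1.3}: one must verify that Equation~(\ref{eqn-1.f}), and in particular the two free indices producing the $S^2$-valued output, transform correctly under $T$ so that the intertwining relation holds with the correct induced action $T_*$ on symmetric $2$-cotensors. Once this naturality is recorded, the vector-valued case follows the scalar argument line for line, since a linear isomorphism carries the zero map to the zero map.
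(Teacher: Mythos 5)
Your proof is correct and follows essentially the same analytic-continuation argument the paper gives for Theorem~\ref{thm-1.3} (and declares carries over verbatim to Theorem~\ref{thm-1.7}): complexify, use the identity theorem to equate real and complex vanishing, and exploit the fact that all nondegenerate symmetric bilinear forms over $\mathbb{C}$ of a given rank are equivalent. The only cosmetic difference is that the paper realizes both signatures as real forms $V(p,q)$ inside a single complexification $W$ via the $\sqrt{-1}$-scaled basis, whereas you complexify each space separately and connect them by a complex isometry $T$ -- which, as you yourself note, is built from exactly those $\sqrt{-1}$ scalings, so it is the same mechanism in different packaging.
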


As in the scalar case, we use Theorem~\ref{thm-1.6} and Theorem~\ref{thm-1.7} to identify
$\mathcal{J}_{2\ell}^{(2)}(p,q)$ with
$\mathcal{J}_{2\ell}^{(2)}(p_1,q_1)$ if $p+q=p_1+q_1=m$ and to define a universal space of invariants
$\mathcal{J}_{2\ell}^{(2)}(m)$ in dimension $m$. The restriction maps
\begin{eqnarray*}
&&r_-^{(2)}:\mathcal{J}_{2\ell}^{(2)}(p,q)\rightarrow\mathcal{J}_{2\ell}^{(2)}(p-1,q)\quad\text{for}\quad p>0\,,\\
&&r_+^{(2)}:\mathcal{J}_{2\ell}^{(2)}(p,q)\rightarrow\mathcal{J}_{2\ell}^{(2)}(p,q-1)\quad\text{for}\quad q>0\,,
\end{eqnarray*}
are defined geometrically as before by setting:
$$
\{r_\pm^{(2)}(P_{2\ell}^{(2)})\}(N,g_N)(\xi)=i_N^*\{P_{2\ell}^{(2)}(N\times S^1,g_N\pm d\theta^2)(\xi,\theta_1)\}
$$
where $i_N^*$ is the dual map induced by the inclusion $N\rightarrow
N\times\{\theta_1\}\subset{ N\times S^1}$. The additional bit of
technical fuss defined in using $i_N^*$ is required as it is
necessary to restrict a symmetric $2$-cotensor from $N\times S^1$ to
$N$ (this is not necessary for scalar valued invariants). The
restriction map $r_+^{(2)}$ is defined similarly if $q>0$. As
before, the restriction maps patch together to define a coherent map
$r^{(2)}$ from $\mathcal{J}_{2\ell}^{(2)}(m)$ to
$\mathcal{J}_{2\ell}^{(2)}(m-1)$ so that
\begin{eqnarray*}
{{r_-^{(2)}}}(P_{2\ell,1,\sigma,\varepsilon(p,q)}^{(2)})=P_{2\ell,1,\sigma,\varepsilon(p-1,q)}^{(2)}\quad\text{if}\quad p>0\,,\\
{{r_+^{(2)}}}(P_{2\ell,2,\sigma,\varepsilon(p,q)}^{(2)})=P_{2\ell,2,\sigma,\varepsilon(p,q-1)}^{(2)}\quad\text{if}\quad
q>0\,.
\end{eqnarray*}
The reason to give a geometric definition first, of course, was to
ensure that the image of a universal curvature identity was again a
universal curvature identity so that
$r^{(2)}(P_{2\ell}^{(2)})$ was defined independent of the
representation of $P_{2\ell}^{(2)}$ in terms of the fundamental
invariants
$\{P_{2\ell,1,\sigma,\varepsilon},P_{2\ell,2,\sigma,\varepsilon}\}$.

In analogy with the Pfaffian, we define ${
T_{2\ell,m,\varepsilon}^{(2)}}\in\mathcal{J}_{2\ell}^{(2)}(V,\varepsilon)$
by setting: {$$\begin{array}{l} \displaystyle
T_{2\ell,m,\varepsilon}^{(2)}:=\sum_{i_1,...,i_{\ell+1},j_1,...,j_{\ell+1}=1}^{m}A_{i_1i_2j_2j_1}...
A_{i_{\ell-1}i_\ell j_\ell j_{\ell-1}}e^{i_{\ell+1}}\circ
e^{j_{\ell+1}}\\
\qquad\qquad\qquad\qquad\times
\varepsilon^*(e^{i_1}\wedge...\wedge
e^{i_{\ell+1}},e^{j_1}\wedge...\wedge e^{j_{\ell+1}})\,.\vphantom{\vrule
height 11pt}
\end{array}$$}
We let $T_{2\ell,m}^{(2)}$ denote the invariants $\left\{{
T_{2\ell,m,\varepsilon}^{(2)}}\right\}$ in dimension $m$. We then
have that
$$r^{(2)}(T_{2\ell,m}^{(2)})=T_{2\ell,m-1}^{(2)}\,.$$
Consequently, once again, these invariants are universal. Theorem~\ref{thm-1.4} generalizes to this setting to become:

\begin{theorem}\label{thm-1.8}
\ \begin{enumerate}
\item $r^{(2)}:\mathcal{J}_{2\ell}^{(2)}(m)\rightarrow\mathcal{J}_{2\ell}^{(2)}(m-1)$ is surjective for any $m$.
\smallbreak\item If $m>2\ell+1$, then
$r^{(2)}:\mathcal{J}_{2\ell}^{(2)}(m)\rightarrow\mathcal{J}_{2\ell}^{(2)}(m-1)$
is injective. \smallbreak\item If $m=2\ell+1$, then
$\ker\left\{r^{(2)}:\mathcal{J}_{2\ell}^{(2)}(m)\rightarrow\mathcal{J}_{2\ell}^{(2)}(m-1)\right\}
=\operatorname{Span}\left\{T_{2\ell,m}^{(2)}\right\}$.
\end{enumerate}
\end{theorem}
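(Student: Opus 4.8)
The plan is to prove Theorem~\ref{thm-1.8} by following verbatim the strategy used for its scalar counterpart Theorem~\ref{thm-1.4}: reduce all three assertions to the positive definite signature $(0,m)$, where they were established in \cite{GPS11}, by invoking the signature-independence results Theorem~\ref{thm-1.6} and Theorem~\ref{thm-1.7}. First I would observe that, by Theorem~\ref{thm-1.6}, every element of $\mathcal{J}_{2\ell}^{(2)}(V,\varepsilon)$ arises as a linear combination $P_{2\ell,\mathcal{D},\varepsilon}^{(2)}$ of the fundamental invariants, so that $\mathcal{J}_{2\ell}^{(2)}(V,\varepsilon)$ is the image of the map $\mathcal{D}\mapsto P_{2\ell,\mathcal{D},\varepsilon}^{(2)}$ from the fixed, signature-independent vector space of coefficient systems $\mathcal{D}$. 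By Theorem~\ref{thm-1.7}, the kernel of this map—the space of coefficient systems for which $P_{2\ell,\mathcal{D},\varepsilon}^{(2)}$ vanishes identically on $\mathfrak{A}(V)$—depends only on $m=p+q$ and not on the signature. Consequently the abstract space $\mathcal{J}_{2\ell}^{(2)}(m)$ is well defined and its elements are canonically identified across all signatures of dimension $m$, exactly as in Remark~\ref{rmk-1.1}.

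The second step is to identify the restriction map $r^{(2)}$ across signatures. The identities recorded just above the statement,
$$
r_-^{(2)}(P_{2\ell,i,\sigma,\varepsilon(p,q)}^{(2)})=P_{2\ell,i,\sigma,\varepsilon(p-1,q)}^{(2)},\qquad
r_+^{(2)}(P_{2\ell,i,\sigma,\varepsilon(p,q)}^{(2)})=P_{2\ell,i,\sigma,\varepsilon(p,q-1)}^{(2)},
$$
say precisely that $r^{(2)}$, written out on the fundamental invariants, is given by the same coefficient bookkeeping in every signature; geometrically this holds because the product metric $g_N\pm d\theta^2$ is flat in the $S^1$ factor, so each contraction in $P_{2\ell,i,\sigma}^{(2)}$ simply loses the range coming from the extra coordinate while $i_N^*$ returns the symmetric $2$-cotensor to $N$. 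Thus, under the identification of the first step, the diagram relating $r^{(2)}$ to the positive definite restriction map commutes, and the injectivity, surjectivity, and kernel of $r^{(2)}$ are all determined by the corresponding linear-algebraic data in the positive definite case.

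Having set up this framework I would finish by reading off each part. Part (1), surjectivity, in fact follows directly from the displayed restriction identities together with Theorem~\ref{thm-1.6}, since every fundamental invariant in dimension $m-1$ is the image of the corresponding fundamental invariant in dimension $m$. Parts (2) and (3) are the genuine dimension-count statements: injectivity for $m>2\ell+1$, and the identification of the kernel with $\operatorname{Span}\{T_{2\ell,m}^{(2)}\}$ at the critical dimension $m=2\ell+1$. For the latter one uses that $T_{2\ell,m}^{(2)}$ vanishes in dimensions below $2\ell+1$ and restricts compatibly, $r^{(2)}(T_{2\ell,m}^{(2)})=T_{2\ell,m-1}^{(2)}$, so that $T_{2\ell,2\ell+1}^{(2)}$ lies in the kernel and plays, for $r^{(2)}$, exactly the role that the Pfaffian $E_{2\ell,m}$ plays for $r$ in Theorem~\ref{thm-1.4}; the positive definite computation of \cite{GPS11} then shows this element spans the kernel.

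The hard part is not any new computation but verifying cleanly that the reduction is legitimate: one must ensure that $r^{(2)}$ is well defined on the abstract space $\mathcal{J}_{2\ell}^{(2)}(m)$, i.e. that it carries universal $2$-tensor identities to universal $2$-tensor identities independently of the representation chosen in terms of the fundamental invariants. This is exactly why $r^{(2)}$ was introduced geometrically, via restriction from $N\times S^1$ and the pullback $i_N^*$, rather than purely algebraically; Theorem~\ref{thm-1.6} and Theorem~\ref{thm-1.7} then guarantee that this geometric definition descends to a well-defined map on $\mathcal{J}_{2\ell}^{(2)}(m)$ matching the positive definite restriction map, and the theorem follows.
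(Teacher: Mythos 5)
Your proposal is correct and follows essentially the same route as the paper: the authors likewise use Theorem~\ref{thm-1.6} and Theorem~\ref{thm-1.7} to reduce all three assertions to the positive definite case, which is then quoted from \cite{GPS11}. Your additional commentary on the well-definedness of $r^{(2)}$ and the compatibility of the fundamental invariants under restriction simply makes explicit the ``discussion above'' that the paper invokes.
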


\begin{proof} The discussion above using Theorem~\ref{thm-1.6} and Theorem~\ref{thm-1.7} shows that it suffices to prove
Theorem~\ref{thm-1.8} in the positive definite context. This was done previously in \cite{GPS11}.
\end{proof}

\begin{remark}\rm If $\ell=2$, we define $Q_{4,\varepsilon,m}^{(2)}\in\mathcal{J}_4^{(2)}(m)$ by setting:
\begin{eqnarray*}
&&Q_{4,\varepsilon,m}^{(2)}(A):=-\textstyle\frac14\varepsilon^{i_1i_2}\varepsilon^{i_3i_4}
\varepsilon^{i_5i_6}\varepsilon^{i_7i_8}\varepsilon_{i_9i_{10}}\\
&&\qquad\times\left(A_{i_1i_3i_4i_2}A_{i_5i_7i_8i_6}
   -4A_{i_1i_3i_4i_5}A_{i_2i_7i_8i_6}
+A_{i_1i_3i_5i_7}A_{i_2i_4i_6i_8}\right)e^{i_9}\circ
e^{i_{10}}\\
&&\quad+\delta_{i_2}^{i_1}\delta_{i_4}^{i_3}
\varepsilon^{i_5i_6}\varepsilon^{i_7i_8}\varepsilon^{i_9i_{10}}\\
&&\qquad\times\left(A_{i_5i_7i_9i_1}A_{i_6i_8i_{10}i_3}-2A_{i_5i_9i_1i_6}A_{i_7i_{10}i_3i_8}\right)e^{i_2}\circ
e^{i_4}\\ &&\quad+\delta_{i_2}^{i_1}\delta_{i_4}^{i_3}
\varepsilon^{i_5i_6}\varepsilon^{i_7i_8}\varepsilon^{i_9i_{10}}\\
&&\qquad\times\left(-2A_{i_1i_5i_7i_3}A_{i_9i_6i_8i_{10}}+A_{i_5i_7i_8i_6}A_{i_9i_1i_3i_{10}}\right)e^{i_2}\circ
e^{i_4}\,.
\end{eqnarray*}
Euh, Park, and Sekigawa \cite{EPS10} showed that if
$\varepsilon$ is positive definite and if $m=4$, then
$$Q_{4,\varepsilon,4}^{(2)}(A)=0\quad\forall\quad A\in\mathfrak{A}(V)\,.$$
Note that this does not hold true if $m\ge5$ so this is a universal curvature identity which holds in
dimension 4 but not in dimension 5. Theorem~\ref{thm-1.7} shows this relation holds in signature $(0,4)$,
$(1,3)$, $(2,2)$, and $(1,3)$ as well. By Theorem~\ref{thm-1.8}, there is a universal constant
$c$ so that $Q_{4,\varepsilon,4}^{(2)}=cT_{4,4}^{(2)}$. We refer to
\cite{EJP11} for an evaluation of the normalizing constants which arise when this invariant is expressed in terms of a
Weyl basis using the methods of universal examples.
\end{remark}

\subsection{Euler-Lagrange Equations}\label{sect-1.9} Let $h$ be an arbitrary symmetric $2$-cotensor field on a compact
pseudo-Riemannian manifold $(M,g)$ of dimension $m$. We form the $1$-parameter family
$g(\vartheta):=g+\vartheta h$; this is non-degenerate for $\vartheta$ small. Since the Pfaffian $E_{2\ell}$ only involves
the first and second derivatives of the metric, the variation only
involves the first and second derivatives of $h$. We may therefore
express:
\begin{eqnarray*}
&&\left.\partial_\vartheta\left\{
\vphantom{\vrule height 11pt}E_{2\ell,m}(g(\vartheta))\left|\operatorname{dvol}\right|(g(\vartheta))\right\}
\right|_{\vartheta=0}\\
&=&\left\{\vphantom{\vrule height 11pt}
Q_{2\ell,m}^{ij}h_{ij}+Q_{2\ell,m}^{ijk}h_{ij;k}+Q_{2\ell,m}^{ijkl}h_{ij;kl}\right\}\left|\operatorname{dvol}\right|(g)\end{eqnarray*}\medbreak\noindent
where $h_{ij;k}$ and $h_{ij;kl}$ give the components of the
covariant derivative of $h$ with respect to the Levi-Civita
connection of $g$.  Let $Q_{2\ell,m;l}^{ijk}=$ and
$Q_{2\ell,m;uv}^{ijkl}$ be the components of the first and second
covariant derivatives of these tensors, respectively.  Define:
\begin{equation}\label{eqn-1.g}
S_{2\ell,m}^{ij}:=Q_{2\ell,m}^{ij}-Q_{2\ell,m;k}^{ijk}+Q_{2\ell,m;lk}^{ijkl}\,.
\end{equation}
The tensor $S^{ij}_{2\ell,m}e_i\circ e_j\in S^2(V)$ is characterized by the property that if $(M,g)$ is any
compact pseudo-Riemannian manifold of dimension $m$ with empty boundary, then we may integrate
by parts to see that:
$$\partial_\vartheta\left.\left\{\int_ME_{2\ell,m}(g(\vartheta))\left|\operatorname{dvol}\right|_{g(\vartheta)}\right\}
\right|_{\vartheta=0}
=\int_MS_{2\ell,m}^{ij}h_{ij}\left|\operatorname{dvol}\right|(g)\,.
$$
We use the metric to raise and lower indices and to define the corresponding symmetric
2-cotensor $S_{2\ell,m}^{(2)}$ by the identity:
$$S^{ij}_{2\ell,m}h_{ij}=g(S_{2\ell,m}^{(2)},h)\,.$$
By Theorem~\ref{thm-1.5}, this vanishes if
$m=2\ell$.  In Section~\ref{sect-5}, we shall establish a conjecture of Berger \cite{M70} in the
pseudo-Riemannian setting:
\begin{theorem}\label{thm-1.9}
$S_{2\ell,m}^{(2)}=c_{2\ell} T_{2\ell,m}^{(2)}\in\mathcal{J}_{2\ell,m}^{(2)}$ for any $(\ell,m)$.\end{theorem}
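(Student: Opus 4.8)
The plan is to reduce the assertion to the structural dichotomy of Theorem~\ref{thm-1.8}, after first showing that $S^{(2)}_{2\ell,m}$ actually lies in the universal algebraic space $\mathcal{J}^{(2)}_{2\ell}(m)$. I would begin by recording three properties of $S^{(2)}_{2\ell,m}$. First, the variational computation producing the coefficients $Q^{ij}_{2\ell,m}$, $Q^{ijk}_{2\ell,m}$, $Q^{ijkl}_{2\ell,m}$ and hence $S^{ij}_{2\ell,m}$ via Equation~(\ref{eqn-1.g}) is given by one and the same polynomial formula in the jets of the metric in every signature, the signature entering only through contraction with $\varepsilon$; thus $S^{(2)}_{2\ell,m}$ is universally defined and homogeneous of degree $2\ell$. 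Second, when $m=2\ell$ the Chern-Gauss-Bonnet formula (Theorem~\ref{thm-1.5}) identifies $\int_ME_{2\ell}|\operatorname{dvol}|$ with the topological invariant $\chi(M)$, so its first variation vanishes for every $h$; as $h$ is arbitrary and, by Theorem~\ref{thm-1.1}, the curvature may be prescribed freely at a point, a localization argument forces $S^{(2)}_{2\ell,2\ell}=0$. Third, the construction is natural under restriction: since $r(E_{2\ell,m})=E_{2\ell,m-1}$, the volume element of $N\times S^1$ factors through the flat $S^1$ direction, and the variation commutes with $i_N^*$, one obtains $r^{(2)}(S^{(2)}_{2\ell,m})=S^{(2)}_{2\ell,m-1}$.

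The central difficulty, and the step I expect to be the main obstacle, is to show that $S^{(2)}_{2\ell,m}\in\mathcal{J}^{(2)}_{2\ell}(m)$, i.e. that it is algebraic in the curvature rather than merely an invariant of the curvature and its covariant derivatives. A priori the terms $Q^{ijk}_{;k}$ and $Q^{ijkl}_{;lk}$ in Equation~(\ref{eqn-1.g}) raise the differential order by two and could contribute terms in $\nabla R$ and $\nabla^2R$. The resolution is the classical Lovelock cancellation: the totally antisymmetric Pfaffian structure of $E_{2\ell}$ forces every such derivative term to cancel by the second Bianchi identity $R_{ij[kl;n]}=0$ together with its first covariant derivative. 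Since the second Bianchi identity holds verbatim in all signatures, this cancellation is signature-independent, and it is precisely here that the \emph{solely in terms of curvature} assertion of the introduction is established. Alternatively, because the formula for $S^{ij}_{2\ell,m}$ is universal, one may invoke the known Riemannian resolution of Berger's conjecture and transfer the algebraicity to arbitrary signature by the analogue of Theorem~\ref{thm-1.7} for invariants depending on covariant derivatives of the curvature.

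Granting $S^{(2)}_{2\ell,m}\in\mathcal{J}^{(2)}_{2\ell}(m)$, the proof concludes by induction on $m$ inside this universal space. By Theorem~\ref{thm-1.8}(3) the invariant $T^{(2)}_{2\ell,2\ell+1}$ spans the kernel of $r^{(2)}$ in dimension $2\ell+1$; in particular $T^{(2)}_{2\ell,2\ell}=r^{(2)}(T^{(2)}_{2\ell,2\ell+1})=0$, and iterating, both $S^{(2)}_{2\ell,m}$ and $T^{(2)}_{2\ell,m}$ vanish for $m\le2\ell$, so the identity is trivial there. For $m=2\ell+1$, naturality gives $r^{(2)}(S^{(2)}_{2\ell,2\ell+1})=S^{(2)}_{2\ell,2\ell}=0$, whence $S^{(2)}_{2\ell,2\ell+1}\in\ker r^{(2)}=\operatorname{Span}\{T^{(2)}_{2\ell,2\ell+1}\}$ by Theorem~\ref{thm-1.8}(3); this yields a constant $c_{2\ell}$ with $S^{(2)}_{2\ell,2\ell+1}=c_{2\ell}T^{(2)}_{2\ell,2\ell+1}$. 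For $m>2\ell+1$ the inductive hypothesis and naturality give $r^{(2)}(S^{(2)}_{2\ell,m}-c_{2\ell}T^{(2)}_{2\ell,m})=S^{(2)}_{2\ell,m-1}-c_{2\ell}T^{(2)}_{2\ell,m-1}=0$, and since $r^{(2)}$ is injective for $m>2\ell+1$ by Theorem~\ref{thm-1.8}(2), we conclude $S^{(2)}_{2\ell,m}=c_{2\ell}T^{(2)}_{2\ell,m}$ with the same constant for every $m$.

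Finally, no separate passage between signatures is needed in this last stage: once $S^{(2)}_{2\ell,m}$ is known to be algebraic, the entire argument takes place in the single universal space $\mathcal{J}^{(2)}_{2\ell}(m)$ furnished by Theorem~\ref{thm-1.6} and Theorem~\ref{thm-1.7}, so the resulting identity holds simultaneously in every signature $(p,q)$ with $p+q=m$.
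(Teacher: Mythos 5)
Your endgame coincides with the paper's: both arguments use the naturality $r^{(2)}(S^{(2)}_{2\ell,m})=S^{(2)}_{2\ell,m-1}$, the vanishing $S^{(2)}_{2\ell,2\ell}=0$ coming from Theorem~\ref{thm-1.5}, Theorem~\ref{thm-1.8}(3) in the critical dimension $2\ell+1$ to produce the constant $c_{2\ell}$, and an induction on $m$ to propagate the identity upward. The genuine gap sits exactly where you predicted it would: the claim $S^{(2)}_{2\ell,m}\in\mathcal{J}^{(2)}_{2\ell}(m)$. Your primary route invokes ``the classical Lovelock cancellation'' via the second Bianchi identity, but that cancellation \emph{is} the Riemannian case of Berger's conjecture (Kuz'mina, Labbi) whose pseudo-Riemannian extension is the very content of the theorem; naming the expected mechanism is not carrying out the computation, and your assertion that the computation is signature-blind is plausible but unverified --- one would have to either redo it in index form or check that the published Riemannian proofs use nothing tied to positive definiteness. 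Your fallback route --- cite the Riemannian result and transfer it by ``the analogue of Theorem~\ref{thm-1.7} for invariants depending on covariant derivatives of the curvature'' --- is a legitimate strategy, but that analogue is neither in the paper nor proven in your proposal: Theorem~\ref{thm-1.7} concerns only polynomials in an algebraic curvature tensor, and extending the analytic-continuation argument to the jet spaces $\mathcal{I}^{(2)}_{2\ell}(p,q)$ requires its own argument. So both of your routes reduce to the known Riemannian theorem plus an unproven transfer claim.

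The paper closes this gap with a device your proposal lacks: Lemma~\ref{lem-5.1}, a degree-counting lemma valid for \emph{arbitrary} differential invariants, proved directly in any signature. Working in coordinates normalized so that $g_{ij}=\varepsilon_{ij}+O(|x|^2)$, one shows that any $0\ne P^{(2)}\in\mathcal{I}^{(2)}_{2\ell}(p,q)$ killed by $r^{(2)}_\pm$ must have every coordinate index appearing at least twice in each monomial; counting indices against the order $2\ell$ forces $p+q\le2\ell+1$, and in the borderline case $p+q=2\ell+1$ forces every jet variable to be a $2$-jet, i.e. $P^{(2)}\in\mathcal{J}^{(2)}_{2\ell}(p,q)$. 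This yields the algebraicity of $S^{(2)}_{2\ell,2\ell+1}$ for free, and --- the second place your argument differs --- for $m>2\ell+1$ the paper applies Lemma~\ref{lem-5.1}(2) directly to the difference $S^{(2)}_{2\ell,m}-c_{2\ell}T^{(2)}_{2\ell,m}\in\mathcal{I}^{(2)}_{2\ell}$, concluding it vanishes outright; it never needs to know in advance that $S^{(2)}_{2\ell,m}$ is algebraic in those dimensions, whereas your appeal to the injectivity statement of Theorem~\ref{thm-1.8}(2) does. If you supply a proof of Lemma~\ref{lem-5.1}, or of the transfer principle for $\mathcal{I}^{(2)}_{2\ell}$, your induction goes through; as written, the central step is asserted rather than proven.
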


A-priori, Equation~(\ref{eqn-1.g}) involves the second covariant
derivatives of the curvature tensor. There are appropriate
restriction maps in this category. As
$$
    {{r_\pm^{(2)}}}(S_{2\ell,m}^{(2)})=S_{2\ell,m-1}^{(2)}\,,
$$
these
invariants are universally defined. We will establish Theorem~\ref{thm-1.9} by showing that in fact
$S_{2\ell,m}^{(2)}$ only depends on the curvature tensor and not on its covariant derivatives and thus
$S_{2\ell,m}^{(2)}\in\mathcal{J}_{2\ell,m}^{(2)}$. By Theorem~\ref{thm-1.5}, the Euler-Lagrange
equations vanish if $m=2\ell$ and thus $S_{2\ell,m}^{(2)}$ belongs to $\ker(r_\pm)$. We may then use
Theorem~\ref{thm-1.8} to established the desired identity
$$S_{2\ell,m}^{(2)}=c_{2\ell}T_{2\ell,m}^{(2)}\,.$$

 In the Riemannian setting, this result is not new. It was first established
by Kuz'mina \cite{K74} and subsequently established using different methods by Labbi \cite{L05,L07,L08}.
The present paper was motivated in part by a desire to extend this result from the Riemannian setting
to the pseudo-Riemannian setting.

\section{The proof of Theorem~\ref{thm-1.1}}\label{sect-2} Let
$(V,\varepsilon)$ be an inner product space and let $A\in\mathfrak{A}(V)$. Fix a
basis $\{e_i\}$ for $V$ to identify $V$ with $\mathbb{R}^m$. Define:
$$
\textstyle g_{ik}:=\varepsilon_{ik}-\frac13A_{ijlk}x^jx^l\,.
$$
Clearly $g_{ik}=g_{ki}$. As $g_{ik}(0)=\varepsilon_{ik}$ is non-singular,  $g$ is a
pseudo-Riemannian metric on some neighborhood of the origin in $\mathbb{R}^m$.
Let
$$g_{ij/k}:=\partial_{x_k}g_{ij}\quad\text{and}\quad
  g_{ij/kl}:=\partial_{x_k}\partial_{x_l}g_{ij}\,.
$$
The Christoffel symbols of the first kind\index{Christoffel symbols of the first kind} are given by:
$$
\Gamma_{ijk}:=g(\nabla_{\partial_{x_i}}\partial_{x_j},\partial_{x_k})
       =\textstyle\frac12(g_{jk/i}+g_{ik/j}-g_{ij/k})\,.
$$
As $g=\varepsilon+O(|x|^2)$ and $\Gamma=O(|x|)$, we may compute:
\medbreak\qquad
$R_{ijkl}\phantom{.}=\{\partial_{x_i}\Gamma_{jkl}-\partial_{x_j}\Gamma_{ikl}\}+O(|x|^2)$
\smallbreak\qquad\qquad\,\,
$=\textstyle\frac12\{g_{jl/ik}+g_{ik/jl}
    -g_{jk/il}-g_{il/jk}
    \}+O(|x|^2)$
\smallbreak\qquad\qquad\,\,
$=
 \textstyle\frac16\{-A_{jikl}-A_{jkil}-A_{ijlk}-A_{iljk}$
\smallbreak\qquad\qquad\qquad
$   +A_{jilk}+A_{jlik}+A_{ijkl}+A_{ikjl}\}+O(|x|^2)$
\smallbreak\qquad\qquad\,\,
$=\textstyle\frac16\{4A_{ijkl}-2A_{iljk}-2A_{iklj}\}+O(|x|^2)$
\smallbreak\qquad\qquad\,\,
$ =A_{ijkl}+O(|x|^2)$.
\medbreak This argument shows that $(\mathbb{R}^m,g)$ is the germ of a
pseudo-Riemannian manifold which provides a geometric realization of the model
$(V,\varepsilon,A)$ at the origin; it is non-degenerate on some neighborhood $\mathcal{U}$ of the
origin. Let
$\mathbb{T}^m:=\mathbb{R}^m/\mathbb{Z}^m$ be the torus and let $(\theta^1,...,\theta^m)$ be the usual
periodic parameters. We define the flat metric
$h(\partial_{\theta_i},\partial_{\theta_j})=\varepsilon_{ij}$. We may regard
$\mathcal{U}$ as a neighborhood of $0$ in $\mathbb{T}^m$ as well. Let
$\phi$ be a plateau function which is identically 1 near $0$ in $\mathbb{T}^m$
and which has compact support in $\mathcal{U}$. We consider $g^\phi:=\phi
g+(1-\phi)h$. Since $g^\phi$ agrees with $g$ on a smaller neighborhood of $0$,
$(\mathbb{T}^m,g)$ provides a geometrical realization of $(V,\varepsilon,A)$ at
$0$. Since $g(0)=h(0)$, $g^\phi$ is non-degenerate for $\phi$ with sufficiently
small support. \hfill\qed

\begin{remark}\rm The role of the torus is inessential; any pseudo-Riemannian
manifold of the proper signature could have been used; the crucial point is that the manifold in question
should admit some background pseudo-Riemannian manifold of the given signature.
\end{remark}

\section{The proof of Theorem~\ref{thm-1.2} and of Theorem~\ref{thm-1.6}}\label{sect-3}
Let $(V,\varepsilon)$ be an inner product space and let $(V^*,\varepsilon^*)$ be the dual
inner product space. If $\vec v=(v^1,\dots,v^k)$ and if $\vec w=(w^1,\dots,w^k)$
are elements of $\times^k(V^*)$, the map
$$\vec v\times\vec w\rightarrow\varepsilon^*(v^1,w^1)\cdot\cdot\cdot
\varepsilon^*(v^k,w^k)$$ is a bilinear symmetric map from
{{$\left\{\times^k V^*\right\}\times\left\{\times^k V^*\right\}$ to
$\mathbb{R}$}} which extends to a symmetric inner
product\index{inner product} on $\varepsilon^*$ to $\otimes^k(V^*)$.
If $\{e_i\}$ is an orthonormal basis for $V$, let $\{e^i\}$ be the
corresponding orthonormal basis for $V^*$. If $I=(i_1,\dots,i_k)$ is
a multi-index, let $e^I:=e^{i_1}\otimes\dots\otimes e^{i_k}$. The
collection $\{e^I\}_{|I|=k}$ forms a basis for $\otimes^k(V^*)$ with
$$\varepsilon^*( e^I,e^K)=\left\{\begin{array}{rll}0&\quad\text{if}&I\ne K\\
\varepsilon^*(e^{i_1},e^{i_1})\dots\varepsilon^*
(e^{i_k},e^{i_k})&\quad\text{if}&I=K\end{array}\right\}.$$
Since
$\varepsilon^*(e^I,e^I)=\pm1$, $\varepsilon^*$ is non-degenerate on $\otimes^k(V^*)$. The
orthogonal group\index{orthogonal group}
$\mathcal{O}(V,\varepsilon)$ extends to act naturally on
$\otimes^k(V^*)$ by pull-back and preserves this inner product\index{inner product}.
We have the following useful, if elementary, observation:

\begin{lemma}\label{lem-3.1}
Let $W$ be an $\mathcal{O}(V,\varepsilon)$ invariant subspace of $\otimes^k(V^*)$.
Then the restriction of $\varepsilon^*$ to $W$ is non-degenerate.
\end{lemma}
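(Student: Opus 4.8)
The plan is to reduce the statement to showing that the radical $W\cap W^\perp$ of the restricted form vanishes, and to handle the indefiniteness of $\varepsilon^*$ by manufacturing an auxiliary positive definite inner product that is compatible with the group action. First I would observe that since $\mathcal{O}(V,\varepsilon)$ preserves $\varepsilon^*$ and $W$ is invariant, the orthogonal complement $W^\perp$ is again $\mathcal{O}(V,\varepsilon)$ invariant; consequently the radical $N:=W\cap W^\perp$, which is exactly the kernel of $\varepsilon^*|_W$, is an invariant subspace on which $\varepsilon^*$ vanishes identically. The whole problem is therefore to show $N=\{0\}$, and the only difficulty is that $\varepsilon^*$ is indefinite, so non-degeneracy of the restriction is not automatic.

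The key step is to produce a distinguished element of the orthogonal group. Fix an orthonormal basis $\{e_i\}$ and let $\theta\colon V\to V$ act as $+1$ on the spacelike basis vectors and $-1$ on the timelike ones, i.e. $\theta e_i=\varepsilon(e_i,e_i)e_i$. A one-line check shows $\theta\in\mathcal{O}(V,\varepsilon)$, since $\theta$ respects the orthogonal splitting of $V$ and acts by $\pm1$ on each summand. Its pullback $\Theta:=\theta^*$ on $\otimes^k(V^*)$ is then diagonal in the basis $\{e^I\}$, with $\Theta e^I=\varepsilon^*(e^I,e^I)\,e^I$. I would next define
$$\langle x,y\rangle:=\varepsilon^*(\Theta x,y),$$
and verify that $\langle e^I,e^J\rangle=\delta_{IJ}$ because $\varepsilon^*(e^I,e^I)^2=1$; thus $\{e^I\}$ is orthonormal for $\langle\cdot,\cdot\rangle$, which is therefore positive definite.

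Finally I would exploit invariance. Since $\theta\in\mathcal{O}(V,\varepsilon)$ and $W$ is invariant, $\Theta$ preserves $W$; and since $\Theta$ is an orthogonal involution it is self-adjoint for $\varepsilon^*$. Now take any $\eta$ in the radical $N$. Because $\Theta\eta\in W$ while $\eta$ is $\varepsilon^*$-orthogonal to all of $W$, we obtain
$$\langle\eta,\eta\rangle=\varepsilon^*(\Theta\eta,\eta)=\varepsilon^*(\eta,\Theta\eta)=0,$$
whence $\eta=0$ by positive definiteness of $\langle\cdot,\cdot\rangle$. Therefore $N=\{0\}$, equivalently $W\cap W^\perp=\{0\}$, and $\varepsilon^*|_W$ is non-degenerate.

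The main obstacle, and the one genuinely substantive idea, is the second step: recognizing that the sign involution on an orthonormal basis is itself an element of $\mathcal{O}(V,\varepsilon)$, so that the positive definite companion form $\langle\cdot,\cdot\rangle=\varepsilon^*(\Theta\,\cdot,\cdot)$ is built out of the group action rather than imposed by hand. Once this is available the argument is formal, consisting only of the standard linear algebra of radicals together with the self-adjointness and invariance bookkeeping.
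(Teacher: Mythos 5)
Your proposal is correct and rests on exactly the same key idea as the paper's proof: the sign involution $\Theta$ (equal to $+\operatorname{Id}$ on the spacelike part and $-\operatorname{Id}$ on the timelike part) lies in $\mathcal{O}(V,\varepsilon)$, hence preserves the invariant subspace $W$, and is diagonal on the basis $\{e^I\}$ with eigenvalues $\varepsilon^*(e^I,e^I)=\pm1$. The only difference is the final bookkeeping: you package $\Theta$ into the auxiliary positive definite form $\langle x,y\rangle=\varepsilon^*(\Theta x,y)$ and show the radical $W\cap W^\perp$ is null for it, whereas the paper decomposes $W=W_+\oplus W_-$ into $\Theta$-eigenspaces and observes these are spacelike, timelike, and mutually orthogonal --- two equivalent ways of finishing from the same observation.
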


\begin{proof} Find an orthogonal direct sum decomposition $V=V_+\oplus V_-$
where $V_+$ is
spacelike and $V_-$ is timelike, i.e. the restriction of the inner product to $V_+$ is positive
definite and the restriction of the inner product to $V_-$ is negative definite. Let
$\Theta=\pm\operatorname{Id}$ on
$V_\pm$ define an element of
$\mathcal{O}(V,\varepsilon)$. Let
$\{e_1,\dots,e_p\}$ be an orthonormal basis for $V_-$ and let $\{e_{p+1},\dots,e_m\}$ be an
orthonormal basis for $V_+$. We have that:
$$
\Theta^*e^I=\left\{\varepsilon^*(e^{i_1},e^{i_1})\cdot\cdot\cdot\varepsilon^*(e^{i_k},e^{i_k})\right\}
e^I=\varepsilon^*(e^I,e^I)e^I=\pm e^I\,.
$$
Let $0\ne w $. If
$\Theta^*w=w$, then
$w$ is a spacelike vector in
$\otimes^k(V^*)$ (i.e. $\varepsilon^*(w,w)>0$) while if $\Theta^*w=-w$, then $w$ is a timelike vector in
$\otimes^k(V^*)$ (i.e. $\varepsilon^*(w,w)<0$). Thus, in particular, the induced inner product on
$\otimes^k(V^*)$ is non-degenerate.

Let $W$ be a non-trivial $\mathcal{O}(V,\varepsilon)$ invariant subspace of $\otimes^k(V^*)$. Since
$\Theta\in\mathcal{O}(V,\varepsilon)$, $\Theta$ preserves
$W$ by assumption. Thus we may decompose $W=W_+\oplus W_-$ into the $\pm1$ eigenspaces of $\Theta$. Since $\Theta$ acts
orthogonally, $W_+\perp W_-$. Since
$W_+$ is spacelike and
$W_-$ is timelike,
the induced inner product\index{inner product} on $W$ is non-degenerate.\end{proof}
\subsection{The proof of Theorem~\ref{thm-1.2}} Let $P_{2\ell}$ be an invariant map from$\otimes^\ell(\mathfrak{A}(V))$ to $\mathbb{R}$. By Lemma~\ref{lem-3.1}, we have an equivariant
orthogonal decomposition
$$\otimes^{4\ell}(V^*)=\left\{\otimes^\ell(\mathfrak{A}(V))\right\}
     \oplus\left\{\otimes^\ell(\mathfrak{A}(V))\right\}^\perp\,.$$
Extend $P_{2\ell}$ to be $0$ on $\{\otimes^\ell(\mathfrak{A}(V))\}^\perp$ to define an invariant map $Q_{2\ell}$
from $\otimes^{4\ell}(V^*)$ to $\mathbb{R}$ which restricts to yield $P_{2\ell}$ on
$\otimes^\ell(\mathfrak{A}(V))$; there are, of course, many possible such extensions. We may now use H.
Weyl's theorem
\cite{W46} concerning the invariants of the orthogonal group to express $Q_{2\ell}$ in terms of the invariants
$\{Q_{2\ell,\sigma,\varepsilon}\}$; restricting once again to $\otimes^\ell(\mathfrak{A}(V))$ then establishes
Theorem~\ref{thm-1.2} by expressing the original invariant $P_{2\ell}$ in terms of the invariants
$\{P_{2\ell,\sigma,\varepsilon}\}$.\hfill\qed

\subsection{The proof of Theorem~\ref{thm-1.6}}
Let $P_{2\ell}^{(2)}$ be an $\mathcal{O}(V,\varepsilon)$ map from $\otimes^\ell(\mathfrak{A}(V))$ to
$S^2(V^*)$. Contracting with $\varepsilon^*$ then defines an invariant map
$$\eta^{(2)}:\otimes^\ell(\mathfrak{A}(V))\otimes S^2(V^*)\rightarrow\mathbb{R}\,.$$
The subspace $\otimes^\ell(\mathfrak{A}(V))\otimes S^2(V^*)$ is an
$\mathcal{O}(V,\varepsilon)$invariant subspace of
$\otimes^{4\ell+2}(V^*)$. Consequently we may argue as above and use
Lemma~\ref{lem-3.1} to extend $\eta^{(2)}$ to an invariant map
$\tilde\eta^{(2)}$ from $\otimes^{4\ell+2}(V^*)$ to $\mathbb{R}$.
Again, H. Weyl's theorem shows that $\tilde\eta^{(2)}$ can be
expressed in terms of contractions of indices. Thus using the
notation of Equation~(\ref{eqn-1.c}), we can express $\eta^{(2)}$ as
a sum of invariants of the form
$$
\eta^{(2)}(A,h)=\varepsilon^*(P_{2\ell}^{(2)}(A),h)=\sum_{\beta\in\operatorname{Perm}(4\ell+2)}d_{\beta}
Q_{2\ell,\beta,\varepsilon}(A,h)
$$
where
\begin{eqnarray*}
Q_{2\ell,\beta,\varepsilon}(A,h)&:=&
\varepsilon^{i_{\beta_1}i_{\beta_2}}\cdot\cdot\cdot
\varepsilon^{i_{\beta_{4\ell+1}}i_{\beta_{4\ell+2}}}\\&&\quad\times
A_{i_1i_2i_3i_4}\cdot\cdot\cdot A_{i_{4\ell-3}i_{4\ell-2}i_{4\ell-1}i_{4\ell}}h_{i_{4\ell+1}i_{4\ell+2}}
\,.\end{eqnarray*}
We distinguish 2 cases:
\begin{enumerate}
\item Suppose that
$\{\beta_{2j-1},\beta_{2j}\}=\{{4\ell+1},{4\ell+2}\}$ for some
$j$. Then $h_{ij}$ is contracted against itself and all the indices of $A$ are contracted against
indices of $A$. By renumbering things, we may in fact assume that $\beta_{4\ell+1}=4\ell+1$ and
$\beta_{4\ell+2}=4\ell+2$ so that
$$\qquad\qquad Q_{2\ell,\beta,\varepsilon}(A,h)=
\varepsilon^{i_{\sigma_1}i_{\sigma_2}}...\varepsilon^{i_{\sigma_{4\ell-1}}i_{\sigma_{4\ell}}}
A_{i_1i_2i_3i_4}...A_{i_{4\ell-3}i_{4\ell-2}i_{4\ell-1}i_{4\ell}}\varepsilon^{ij}h_{ij}$$
for some $\sigma\in\operatorname{Perm}(4\ell)$. We then have
$$Q_{2\ell,\beta,\varepsilon}(A,h)=\varepsilon^*(P_{2\ell,2,\tilde\sigma,\varepsilon}^{(2)}(A),h)\quad\text{where}
\quad\tilde\sigma=\sigma^{-1}\,.$$
\item If $h_{ij}$ is contracted against indices of $A$, then
we obtain:
$$Q_{2\ell,\beta,\varepsilon}(A,h)=\varepsilon^*(P_{2\ell,1,\sigma,\varepsilon}^{(2)}(A),h)$$
for some suitably chosen $\sigma\in\operatorname{Perm}(4\ell)$.
\end{enumerate}

This analysis permits us to express:
$$\varepsilon^*(P_{2\ell}^{(2)}(A),h)=\varepsilon^*\left(\left\{\sum_{\sigma\in\operatorname{Perm}(4\ell)}
   (d_{1,\sigma,\varepsilon}^{(2)}P_{2\ell,1,\sigma,\varepsilon}^{(2)}(A)
    +d_{2,\sigma,\varepsilon}^{(2)}P_{2\ell,2,\sigma,\varepsilon}^{(2)}(A)\right\},h\right)\,.$$
Since this holds for all $h$, we conclude
\medbreak\hfill
$P_{2\ell}^{(2)}(A)=\sum_{\sigma\in\operatorname{Perm}(4\ell)}
   (d_{1,\sigma,\varepsilon}^{(2)}P_{2\ell,1,\sigma,\varepsilon}^{(2)}(A)
   +d_{2,\sigma,\varepsilon}^{(2)}P_{2\ell,2,\sigma,\varepsilon}^{(2)})(A)\,.$\hfill\qed

\section{The proof of Theorem~\ref{thm-1.3} and of Theorem~\ref{thm-1.7}}\label{sect-4}
\begin{proof} An inner product space is determined, up to isomorphism, by its
signature. Fix a positive definite inner product space
$(V,\varepsilon)$ of dimension
$m$. Theorem~\ref{thm-1.3} will follow if given any signature $(p,q)$ with
$p+q=m$, we can find an inner product space
$(V(p,q),\varepsilon(p,q))$ of signature $(p,q)$ so that
$P_{2\ell,\mathcal{C},\varepsilon}(A)=0$ for all
$A\in\mathfrak{A}(V)$ if and only
$P_{2\ell,\mathcal{C},\varepsilon(p,q)}(A)=0$ for all
$A\in\mathfrak{A}(V(p,q))$; Theorem~\ref{thm-1.7} will follow similarly.

We shall use analytic continuation. Let $W:=V\otimes_{\mathbb{R}}\mathbb{C}$ be
the complexification of $V$, and let
$\mathfrak{A}(W):=\mathfrak{A}(V)\otimes_{\mathbb{R}}\mathbb{C}$ be
the complexification of $\mathfrak{A}(V)$; $\mathfrak{A}(W)$ is the complex
vector space consisting of all elements of $\otimes^4(W^*)$ satisfying
Equation~(\ref{eqn-1.a}). Let $\varepsilon_W$ be the complexification of
$\varepsilon_V$; $\varepsilon_W$ is a symmetric complex bilinear form. Let
$\mathcal{O}_{\mathbb{C}}(W,\varepsilon_W)$ be the complex orthogonal group; this is the group of all
complex linear maps of $W$ preserving $\varepsilon_W$. This group acts
naturally on $\mathfrak{A}(W)$ by pullback. We use Equation~(\ref{eqn-1.b}) to
extend
$P_{2\ell,\mathcal{C},\varepsilon}$ to
$\mathfrak{A}(W)$ to be invariant under the structure group
$\mathcal{O}_{\mathbb{C}}(W,\varepsilon_W)$; this is independent of the particular complex basis chosen
for the complex vector space $W$.

We suppose given a signature $(p,q)$ with $p+q=m$. Let
$$
V(p,q):=\text{Span}_{\mathbb R}\{\sqrt{-1} e_1,
\cdot\cdot\cdot, \sqrt{-1} e_p, e_{p+1}, \cdot\cdot\cdot, e_{p+q}
\}
$$
and let $\varepsilon(p,q)$ be the restriction of $\varepsilon_W$ to $V(p,q)$;
$\varepsilon(p,q)$ is a real inner product of signature $(p,q)$ on the real vector space $V(p,q)$.
Note that
\begin{eqnarray*}
&&(V,\varepsilon)=(V{(0,m)},\varepsilon{(0,m)}),\quad
V({{0}},q)\otimes_{\mathbb{R}}{\mathbb{C}}=W,\\
&&\mathfrak{A}(V({{0}},q))\otimes_{\mathbb{R}}\mathbb{C}=\mathfrak{A}(W)\,.
\end{eqnarray*}

Let $U$ be a real vector space of dimension $r$ and let
$U_{\mathbb{C}}:=U\otimes_{\mathbb{R}}\mathbb{C}$. Suppose that
$P=P(u_1,...,u_r)$ is holomorphic on $\times^r(U_{\mathbb{C}})$. By the identity
theorem, one then has that
$P(u)=0$ for all
$u\in U$ if and only if $P(u)=0$ for all $u\in U_{\mathbb{C}}$. We apply this
observation as follows. Let $\mathcal{C}=
\{c_\sigma\}_{\sigma\in {\operatorname{Perm}}(4\ell)}$ be a collection of
complex constants. Then
$P_{2\ell,\mathcal{C},\varepsilon_W}$ is a holomorphic on $\mathfrak{A}(W)$.
Consequently, the following assertions are equivalent:
\begin{enumerate}
\item $P_{2\ell,\mathcal{C},\varepsilon}(A)=0$ for all $A\in\mathfrak{A}(V)$ $\Leftrightarrow$
\item $P_{2\ell,\mathcal{C},\varepsilon}(A)=0$ for all $A\in\mathfrak{A}(V)\otimes_{\mathbb{R}}\mathbb{C}$
 $\Leftrightarrow$
\item $P_{2\ell,\mathcal{C},\varepsilon(p,q)}(A)=0$ for all
$A\in\mathfrak{A}(V(p,q))\otimes_{\mathbb{R}}\mathbb{C}$
 $\Leftrightarrow$
\item $P_{2\ell,\mathcal{C},\varepsilon(p,q)}(A)=0$ for all
$A\in\mathfrak{A}(V(p,q))$.
\end{enumerate}
Theorem~\ref{thm-1.3} now follows; the proof of Theorem~\ref{thm-1.7} is similar and is therefore
omitted in the interests of brevity.
\end{proof}

\section{The proof of Theorem~\ref{thm-1.9}}\label{sect-5}
A-priori the invariant $S_{2\ell,m}^{(2)}$ of Section~\ref{sect-1.9} can involve first and second covariant derivatives of
the curvature tensor. Our first task is to see that this does not happen. We fix a signature $(p,q)$. For the moment, we
work with a coordinate based formalism. Let $\xi$ be a point of a pseudo-Riemannian manifold $(M,g)$ of signature $(p,q)$
and dimension $m=p+q$. Let
$X=(x^1,...,x^m)$ be a system of local coordinates on $M$ which are defined in a neighborhood of $\xi$ in $M$. Let
$$
g_{ij}(X,g):=g(\partial_{x_i},\partial_{x_j})
$$
give the components of the metric tensor. If $\alpha=(a_1,...,a_m)$, is a multi-index, set$$g_{ij/\alpha}(X,g):=\partial_{x_1}^{a_1}...\partial_{x_m}^{a_m}g_{ij}(X,g)\,.$$A {\it local formula} $P(g_{ij},g_{ij/\alpha})(X,g)$ is a polynomial in the $g_{ij/\alpha}$ variables withcoefficients which are smooth in the $g_{ij}$ variables where $\{g_{ij}\}$ is assumed to define a non-singular bilinear
form of signature $(p,q)$.  We say that $P$ is {\it invariant} if $P(X,g)(\xi)$ depends only on $(g,\xi)$ and not on the
particular coordinate system chosen; it is to be universally and polynomially defined in the category of
pseudo-Riemannian manifolds of signature
$(p,q)$. We let
$\mathcal{I}(p,q)$ be the vector space of all such invariants. The space of symmetric $2$-tensor invariants
$\mathcal{I}^{(2)}(p,q)$ is defined similarly.

There is a natural grading which is defined on $\mathcal{I}(p,q)$ and on $\mathcal{I}^{(2)}(p,q)$ by setting
$\operatorname{ord}(g_{ij/\alpha}):=|\alpha|$. We may then decompose
$$\mathcal{I}(p,q)=\oplus_\ell\mathcal{I}_{2\ell}(p,q)\quad\text{and}\quad
  \mathcal{I}^{(2)}(p,q)=\oplus_\ell\mathcal{I}_{2\ell}^{(2)}(p,q)$$
where the spaces $\mathcal{I}_{2\ell}(p,q)$ and $\mathcal{I}_{2\ell}^{(2)}(p,q)$ consist of invariants which are of order
$2\ell$ in the derivatives of the metric; there are no invariants of odd order. This grading can also be expressed in a
coordinate free fashion by noting that if $P\in\mathcal{I}(p,q)$ and if $P^{(2)}\in\mathcal{I}^{(2)}(p,q)$, then\begin{eqnarray*}
&&P\in\mathcal{I}_{2\ell}(p,q)\quad\Leftrightarrow\quad P(c^2g)=c^{-2\ell}P(g),\\&&P^{(2)}\in\mathcal{I}_{2\ell}^{(2)}(p,q)\quad\Leftrightarrow\quad P^{(2)}(c^2g)=c^{-2\ell-2}P^{(2)}(g)\,.
\end{eqnarray*}
Furthermore, it is clear
that
$$\mathcal{J}_{2\ell}(p,q)\subset\mathcal{I}_{2\ell}(p,q)\quad\text{and}\quad
   \mathcal{J}_{2\ell}^{(2)}(p,q)\subset\mathcal{I}_{2\ell}^{(2)}(p,q)\,.$$
On the other hand, the invariant
$$g^{il}g^{jk}R_{ijkl;ab}dx^a\circ dx^b$$
is an element of $\mathcal{I}_4^{(2)}(p,q)$ which does not belong to $\mathcal{J}_4^{(2)}(p,q)$ since it
involves the
$4^{\operatorname{th}}$ derivatives of the metric.

H. Weyl's theorem \cite{W46} shows that all the elements of
$\mathcal{I}_{2\ell}(p,q)$ and of $\mathcal{I}_{2\ell}^{(2)}(p,q)$
are given by suitable contractions of indices involving covariant
derivatives of the curvature tensor. We shall not introduce the
necessary formalism as it plays no role in our analysis. The
discussion of the restriction map in the geometric context given in
Section~\ref{sect-1.5} then extends the restriction maps defined
previously to define maps:
\begin{eqnarray*}
&&r_-^{(2)}:\mathcal{I}_{2\ell}^{(2)}(p,q)\rightarrow\mathcal{I}_{2\ell}^{(2)}(p-1,q)\quad\text{if}\quad p>0\,,\\
&&r_+^{(2)}:\mathcal{I}_{2\ell}^{(2)}(p,q)\rightarrow\mathcal{I}_{2\ell}^{(2)}(p,q-1)\quad\text{if}\quad q>0\,.
\end{eqnarray*}
These maps are characterized, as previously, by the property that
$$(r_\pm^{(2)} P_{2\ell}^{(2)})(N,g_N)(\xi)=
i_N^*\{P_{2\ell}^{(2)}(N\times S^1,g_N\pm d\theta^2)(\xi,\theta_1)\}\,.$$
Again, if we express the invariants in terms of contractions of indices, the restriction maps simply let the range of
summation be from $1$ to $m-1$ rather than from $1$ to $m$.

Fix a signature $(p,q)$ and let $0\ne
P\in\mathcal{I}_{2\ell}^{(2)}(p,q)$. Let $\varepsilon$ be a given
quadratic form of signature $(p,q)$. Let $\xi$ be a point of a
pseudo-Riemannian manifold $(M,g)$ of signature $(p,q)$. We can
choose coordinates $x=(x^1,...,x^m)$ which are centered at
$\xi$ (i.e. $x(\xi)=0$) and which are normalized so that
$$g_{ij}=\varepsilon_{ij}+O(|x|^2)\quad\text{where}\quad\varepsilon_{ij}=\left\{\begin{array}{lll}
\pm1&\text{if}&i=j\\0&\text{if}&i\ne j\end{array}\right\}\,.$$
With this normalization, the tensor $g_{ij}$ and the first derivatives of the metric play no role and we can regard
$P^{(2)}=P^{(2)}(g_{ij/\alpha})$ as a polynomial in the derivatives (where $|\alpha|\ge2$) which is symmetric
$2$-tensor valued. Thus a typical monomial takes the form
\begin{equation}\label{eqn-5.a}
A^{(2)}=g_{i_1j_1/\alpha_1}...g_{i_rj_r/\alpha_r}dx^{k}\circ
dx^{l}\,.
\end{equation}
We let $c(A^{(2)},P^{(2)})$ be the coefficient of $A^{(2)}$ in
$P^{(2)}$. We say $A^{(2)}$ {\it is a monomial of P} if
$c(A^{(2)},P^{(2)}{{)}}\ne0$. We let
$\operatorname{deg}_n(A^{(2)})$ be the number of times that the
index $n$ appears in the monomial $A^{(2)}$:
\begin{equation}\label{eqn-5.b}
\operatorname{deg}_n(A^{(2)})=\delta_{i_1,n}+\delta_{j_1,n}+\alpha_1(n)+...+\delta_{i_r,n}+\delta_{j_r,n}+\alpha_r(n)
+\delta_{k,n}+\delta_{l,n}
\end{equation}

\begin{lemma}\label{lem-5.1}
Let $0\ne P_{2\ell}^{(2)}\in\mathcal{I}_{2\ell}^{(2)}(p,q)$. Assume that $r_-^{(2)}(P_{2\ell}^{(2)})=0$ if $p>0$ and that
$r_+^{(2)}(P_{2\ell}^{(2)})=0$ if
$q>0$.
\begin{enumerate}
\item $\operatorname{deg}_n(A^{(2)})\ge2$ for every monomial $A^{(2)}$ of $P_{2\ell}^{(2)}$.
\smallbreak\item $p+q\le 2\ell+1$.
\smallbreak\item If $p+q=2\ell+1$, then $P_{2\ell}^{(2)}\in\mathcal{J}_{2\ell}^{(2)}(p,q)$.
\end{enumerate}
\end{lemma}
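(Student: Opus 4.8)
The plan is to run the classical restriction-and-degree-counting argument of invariance theory, feeding in two group-theoretic inputs---the reflections $e_n\mapsto-e_n$ and the permutation symmetries $S_p\times S_q\subset\mathcal{O}(V,\varepsilon)$---together with the hypothesis that $P_{2\ell}^{(2)}$ restricts to zero.

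For assertion (1), I would first fix an index $n$ and use the isometry $T$ of $(\mathbb{R}^m,\varepsilon)$ given by $e_n\mapsto-e_n$ and $e_i\mapsto e_i$ for $i\ne n$. Since $T$ sends $x^n\mapsto-x^n$, each factor $g_{i_sj_s/\alpha_s}$ and the tensorial factor $dx^k\circ dx^l$ of a monomial $A^{(2)}$ pick up a sign for every occurrence of $n$, so $T$ acts on $A^{(2)}$ by $(-1)^{\operatorname{deg}_n(A^{(2)})}$. Invariance of $P_{2\ell}^{(2)}$ forces $c(A^{(2)},P_{2\ell}^{(2)})=0$ whenever $\operatorname{deg}_n(A^{(2)})$ is odd, so $\operatorname{deg}_n(A^{(2)})$ is even for every monomial and every $n$. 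To upgrade ``even'' to ``$\ge2$'' I would use the restriction hypothesis. If $p>0$, then $r_-^{(2)}(P_{2\ell}^{(2)})$ is exactly the subsum of those monomials of $P_{2\ell}^{(2)}$ in which the distinguished timelike circle coordinate does not occur (the flat circle factor kills metric derivatives carrying that index, and $i_N^*$ kills the $dx^k\circ dx^l$ terms carrying it); as distinct monomials are linearly independent, $r_-^{(2)}(P_{2\ell}^{(2)})=0$ says every monomial involves that coordinate, and permuting timelike indices via $S_p$ then gives $\operatorname{deg}_n(A^{(2)})\ge1$ for every timelike $n$. The hypothesis $r_+^{(2)}(P_{2\ell}^{(2)})=0$ (when $q>0$) gives the same for spacelike indices, and since $m=p+q\ge1$ the two cases cover all $n$. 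Combining evenness with $\operatorname{deg}_n\ge1$ yields $\operatorname{deg}_n(A^{(2)})\ge2$.

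For assertion (2), I would count index occurrences. A monomial $A^{(2)}=g_{i_1j_1/\alpha_1}\cdots g_{i_rj_r/\alpha_r}\,dx^k\circ dx^l$ of $P_{2\ell}^{(2)}$ satisfies $\sum_s|\alpha_s|=2\ell$ with each $|\alpha_s|\ge2$, whence $r\le\ell$, and the total number of index slots is $\sum_{n=1}^m\operatorname{deg}_n(A^{(2)})=2r+2\ell+2\le 4\ell+2$. On the other hand, assertion (1) gives $\sum_{n=1}^m\operatorname{deg}_n(A^{(2)})\ge 2m$. Hence $2m\le 4\ell+2$, i.e. $p+q=m\le 2\ell+1$.

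For assertion (3), observe that when $m=2\ell+1$ the chain $2m\le 2r+2\ell+2\le 4\ell+2$ collapses to equalities: every index occurs exactly twice, $r=\ell$, and every $|\alpha_s|=2$. Thus each monomial of $P_{2\ell}^{(2)}$ is a product of exactly $\ell$ pure second derivatives $g_{i_sj_s/c_sd_s}$ times $dx^k\circ dx^l$. To identify such an invariant with a curvature polynomial, I would evaluate $P_{2\ell}^{(2)}$ at a point in geodesic normal coordinates, in which the first derivatives vanish and the second derivatives are given at the center by the curvature, $g_{ij/kl}(0)=-\tfrac13(R_{iklj}+R_{ilkj})$ (cf. the computation in Section~\ref{sect-2}). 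Substituting this linear expression for each of the $\ell$ factors rewrites the value of $P_{2\ell}^{(2)}$ as an $\mathcal{O}(V,\varepsilon)$-invariant, $S^2(V^*)$-valued polynomial of degree $\ell$ in $R$; since this value is coordinate independent and every point is the center of some geodesic normal coordinate system, this exhibits $P_{2\ell}^{(2)}$ as an element of $\mathcal{J}_{2\ell}^{(2)}(p,q)$. The main obstacle is precisely this last step: one must ensure the passage from a polynomial in the symbols $g_{ij/\alpha}$ to a polynomial in the algebraic curvature tensor is legitimate, which is why I use geodesic normal coordinates, where the $2$-jet of the metric is literally curvature with no residual gauge, rather than the weaker normalization $g_{ij}=\varepsilon_{ij}+O(|x|^2)$ used to set up the monomial bookkeeping in parts (1) and (2).
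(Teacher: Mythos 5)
Your proof is correct and takes essentially the same route as the paper: the sign change $e_n\mapsto -e_n$ combined with the vanishing of $r_\pm^{(2)}$ and coordinate permutations yields $\operatorname{deg}_n(A^{(2)})\ge2$, the identical index count gives $2m\le 4\ell+2$, and the collapse to equalities when $m=2\ell+1$ forces $r=\ell$ and $|\alpha_j|=2$ for all $j$. Your geodesic normal coordinate substitution in assertion (3) simply makes explicit the final step that the paper leaves implicit (that a polynomial in pure second derivatives of the metric is a polynomial in the curvature tensor), so it is a welcome elaboration rather than a different argument.
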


\begin{proof} If $p>0$, we choose an orthonormal basis for the model space $\{e_i\}$ so $e_m$ is timelike. Then
$r_-^{(2)}(P_{2\ell}^{(2)})$ is defined by evaluating
$P_{2\ell}^{(2)}$ on a metric of the form $ds^{(2)}_N-d\theta^2$.
The only additional relation that is imposed by restricting our
attention to such metrics is to set $A^{(2)}=0$ if
$\operatorname{deg}_m(A^{(2)})>0$. Thus we may conclude that
$\operatorname{deg}_m(A^{(2)})>0$ for every monomial $A^{(2)}$ of
$P_{2\ell}^{(2)}$. By replacing $\partial_{x_m}$ by
$-\partial_{x_m}$, we may conclude that
$\operatorname{deg}_m(A^{(2)})$ is even and thus
$\operatorname{deg}_m(A^{(2)})\ge 2$ for every monomial $A^{(2)}$ of
$P_{2\ell}^{(2)}$. We can permute the coordinate indices;
this may replace a space-like coordinate vector field by a time-like
coordinate vector field and thus it is important to work with
$r_+^{(2)}$ and $r_-^{(2)}$ simultaneously if both $p$ and $q$ are
positive and it was for this reason that we introduced both
$r_-^{(2)}$ and $r_+^{(2)}$. Thus we have
$$\operatorname{deg}_k(A^{(2)})\ge2\quad\text{for}\quad 1\le k\le p+q\,.$$
We adopt the notation of Equation~(\ref{eqn-5.a}) and use
Equation~(\ref{eqn-5.b}) together with the fact that
$|\alpha_j|\ge2$ to estimate:
\begin{eqnarray*}
&&2(p+q)\le\sum_{n=1}^{p+q}\operatorname{deg}_n(A^{(2)})=2r+2+\sum_{j=1}^r|\alpha_j|
\le 2+2\sum_{j=1}^r|\alpha_j|=2+4\ell\,.
\end{eqnarray*}

Assertion (2) now follows. In the limiting case that $p+q=2\ell+1$,
all of the equalities must have been equalities. In particular,
$|\alpha_j|=2$ for all $j$. This establishes Assertion (3).
\end{proof}

\subsection{The proof of Theorem~\ref{thm-1.9}} It is clear from the definition of the Euler-Lagrange equations that since
$E_{2\ell}$ is universal, we have that $S_{2\ell,2\ell}^{(2)}$ is
universal as well. By Theorem~\ref{thm-1.5}
$S_{2\ell,2\ell}^{(2)}=0$. Thus
${{r^{(2)}}}(S_{2\ell,2\ell+1}^{(2)})=0$ so by Lemma~\ref{lem-5.1},
$$
   S_{2\ell,2\ell+1}^{(2)}\in\mathcal{J}_{2\ell,2\ell+1}\,.
$$
We may therefore apply Theorem~\ref{thm-1.8} to conclude
$$
   S_{2\ell,2\ell+1}^{(2)}=cT_{2\ell,2\ell+1}^{(2)}
$$
for some constant $c$. Therefore $
r^{(2)}(S_{2\ell,2\ell+2}^{(2)}-cT_{2\ell,2\ell+2}^{(2)})=0$ so
 by Lemma~\ref{lem-5.1},
$$
    S_{2\ell,2\ell+2}^{(2)}-cT_{2\ell,2\ell+2}^{(2)}=0\,.
$$
We consider in this fashion to show
$$
   S_{2\ell,m}^{(2)}-cT_{2\ell,m}^{(2)}=0\quad\text{for}\quad m\ge 2\ell+1\,.
$$
The equality if $m<2\ell+1$ is of course trivial since both $S_{2\ell,m}^{(2)}$ and $T_{2\ell,m}^{(2)}$ vanish
in that instance.\hfill\qed

\section*{Acknowledgments}
Research of P. Gilkey partially supported by project MTM2009-07756
(Spain).  Research of {J.H.} Park and K. Sekigawa was supported by
the National Research Foundation of Korea (NRF) grant funded by the
Korea government (MEST) (2011-0012987).


\begin{thebibliography}{99}

\bibitem{M70} M. Berger, Quelques formulas de variation pour une structure riemanniene,
{Ann. Sci. \'Ec. Norm. Sup\'er.} {3} (1970), 285--294.

\bibitem{CL11} F. Carmargo and H. de Lima, New
characterizations of totally geodesic hypersurfaces in anti-de
Siterspace, J. Geom. Physics {60} (2010), 1326--1332.

\bibitem{CRR10} M. Caballero, A. Romery, and R. Rubio,
Uniqueness of maximal surfaces in generalized Robertson-Walker
spacetimes and Calabi-Bernstein type problems, J. Geom. Physics {60}
(2010), 394--402.

\bibitem{Ca10} J. Case, Singularity Theorems and the Lorentzian splitting theorem
for the Bakry-Emery-Ricci tensor, J. Geom. Phys. {60} (2010),
477--490.

\bibitem{C44} S. Chern, A simple intrinsic proof of the Gauss-Bonnet formula for closed
Riemannian manifolds, {Ann. of Math}. {45}, (1944), 747--752.

\bibitem{C63} S. Chern, Pseudo-Riemannian geometry and the Gauss-Bonnet formula, {An.
Acad. Brasil. Ci. 35}  (1963) 17--26.

\bibitem{C11} Y. Chu, A rigidity theorem for complete noncompact Bach-flat
manifolds, J. Geom. Phys. 61 (2011), 516--521.

\bibitem{D10} S. Deshmukh, Minimal hypersurfaces in a nearly K\"ahler 6-sphere,
J. Geom. Phys. {60} (2010), 623--625.

\bibitem{EJP11} Y. Euh, C. Jeong, J.H. Park, A remark concerning universal curvature identities on
4-dimensional Riemannian manifolds, arXiv:1109.4222.

\bibitem{EPS10} Y. Euh, J.H. Park, and K. Sekigawa, A curvature identity
on a 4-dimensional Riemannian manifold, Results. Math., in press,
doi 10.1007/s00025-011-0164-3.

\bibitem{GPS11} P. Gilkey, J.H. Park, and K. Sekigawa,
Universal curvature identities, Diff. Geom. Appl., {29} (2011)
770-778.

\bibitem{G10} {{A.R. Gover}}, Almost Einstein and Poincar\'e-Einstein
manifolds in Riemannian signature, J. Geom. Phys. {60} (2010),
182--204.

\bibitem{GHL11} B. Guo, Z. Han, and H. Li, Two Kazdan-Warner-type identities for
the renormalized volume coefficients and the Gauss-Bonnet curvatures
of a Riemannian metric, Pac. J. Math {251} (2011), 257--268.

\bibitem{H10} G. Hall, Some remarks on the converse of Weyl's conformal theorem,
J. Geom. Phys. {60} (2010), 1--7.

\bibitem{K11} J. Kim, Relative Lorentzian volume comparison with integral Ricci
and scalar curvature bound, J. Geom. Phys. {61} (2011), 1061--1069.

\bibitem{K10} S. Kim, Rigidity of noncompact complete Bach-flat manifolds,
J. Geom. Physics 60 (2010), 637--642.


\bibitem{K74} G. M. Kuz'mina,
Some generalizations of the Riemann spaces of Einstein, {Math. Notes
16} (1974), 961--963; translation from {Mat. Zametki 16} (1974),
619--622.

\bibitem{L05} M.-L. Labbi,
Double forms, curvature structures and the $(p,q)$-curvatures, {
Trans. Am. Math. Soc. 357} (2005), 3971--3992.

\bibitem{L07} M.-L. Labbi, On Gauss-Bonnet Curvatures,
SIGMA, {Symmetry Integrability Geom. Methods Appl.} {3}, Paper 118,
11 p., electronic only (2007).

\bibitem{L08} M.-L. Labbi,
Variational properties of the Gauss-Bonnet curvatures, {Calc. Var.
Partial Differ. Equ. 32} (2008), 175--189.

\bibitem{LS10} L. de Lima and N. Santos, Deformations of 2k-Einstein structures,
J. Geom. Physics {60} (2010), 1279--1287.

\bibitem{M11} M. Manev, A connection with parallel torsion on almost hypercomplex
manifolds with Hermitian and anti-Hermitian metrics, J. Geom.
Physics {61} (2011), 248--259.

\bibitem{M10} D. Maerten, Killing initial data on totally umbilical and compact
hypersurfaces, J. Geom. Physics. {60} (2010), 281--294.

\bibitem{M10x} M. Munn, Volume growth and the topology of manifolds with
non-negative Ricci curvature, J. Geom. Anal. {20} (2010), 723--750.

\bibitem{SX11} K. Shiohama and H. Xu,
An integral formula for Lipschitz-Killing curvature and the critical
points for height functions, J. Geom. Anal {21} (2011), 241--251.

\bibitem{SKY06} {{Y.J.  Suh, J.-H. Kwon, and H.Y. Yang}}, Conformally
symmetric semi-Riemannian manifolds, J. Geom. Physics {56} (2006),
875--901.

\bibitem{W46}  H. Weyl,
The classical groups, {Princeton Univ. Press}, Princeton (1946)
(8${}^{\operatorname{th}}$ printing).

\bibitem{ZW10} S. Zhang and B. Wu, Rigidity theorems for complete spacelike
hypersurfaces with constant scalar curvature in locally symmetric
spaces, J. Geom. Phys. {60} (2010), 333-340.

\end{thebibliography}
\end{document}